\DeclareMathOperator*{\supp}{supp}
\def\supp{{\rm{\ supp\ }}}
\def\supp{{\rm supp}}
\newtheorem{thm}{Theorem}[section]
\newtheorem{lem}{Lemma}[section]
\begin{document}
\title[Bochner-Riesz means on some Sobolev type spaces ]{Almost everywhere
convergence of Bochner-Riesz means on some Sobolev type spaces}
\author{ Dashan Fan and Fayou Zhao$^*$ }
\address[Dashan Fan]{Department of Mathematical Sciences, University of
Wisconsin-Milwaukee, Milwaukee, WI 53201, USA}
\email{fan@uwm.edu}
\address[Fayou Zhao]{Department of Mathematics,
Shanghai University, Shanghai 200444, P.R. China}
\email{fyzhao@shu.edu.cn}
\thanks{ The research was supported by National Natural Science Foundation
of China (Grant Nos. 11471288, 11201287) and China Scholarship Council
(Grant No. 201406895019).\\
* Corresponding author.}
\subjclass[2000]{42B99, 41A35.}

\begin{abstract}
In this paper, we investigate the convergence of the Bochner-Riesz means on
some Sobolev type spaces including $L^p$-Sobolev spaces $(p\geq 1)$ and $H^q$-Sobolev
spaces $(0<q<1)$. The relation between the smoothness imposed on functions and the rate of
almost everywhere convergence of the generalized Bochner-Riesz means is
given.
\end{abstract}

\keywords{Hardy-Sobolev spaces; Bochner-Riesz means, Sobolev spaces,
almost everywhere convergence, maximal functions.}
\maketitle

\section{Introduction}

Let $R>0.$ We consider the generalized Bochner-Riesz means ${S_{R}^{\delta
,\gamma }}$ on the Euclidean space $\mathbb{R}^{n}$ defined via the Fourier
transform by
\begin{equation*}
{(S_{R}^{\delta ,\gamma }f)}^{\wedge }(\xi )=\left( 1-\frac{|\xi |^{\gamma }%
}{R^{\gamma }}\right) _{+}^{\delta }\widehat{f}(\xi ),
\end{equation*}%
where $\delta $ and $\gamma $ are two real numbers satisfying $\delta >-1$
and $\gamma >0$.  Also, we may initially assume that $f$ are functions in
the Schwartz class $\mathscr{S}(\mathbb{R}^{n})$. Especially, $S_{R}^{\delta
,2}$ is the classical Bochner-Riesz means which was studied by many authors
[1--7, 12, 13, 17, 20, 27]. %{\cite{B1,CS,CS2,C,CF,CV,D, L,LS,LY,S4,Tao}}.
 The associated maximal operator of \ ${S_{R}^{\delta ,\gamma }}$ is
defined by
\begin{equation*}
(S_{\ast }^{\delta ,\gamma }f)(x)=\sup_{R>0}|(S_{R}^{\delta ,\gamma }f)(x)|.
\end{equation*}

The study of convergence of $S_{R}^{\delta ,2}$ is a long time standing
subject in the classical theory of Fourier analysis. The number $\delta
=(n-1)/2$ is called the critical index, since $\delta >{(n-1)}/{2},$
$\lim_{R\rightarrow \infty }S_{R}^{\delta ,2}f(x)=f(x)$, a.e. for any $f\in
L^{1}(\mathbb{R}^{n}),$ while Stein \cite{S3} found an $L^{1}(\mathbb{R}^{n})
$ function $f$ for which $\lim \sup_{R\rightarrow \infty
}|S_{R}^{(n-1)/2,2}f(x)|=\infty $ a.e. Finding a suitable subspace of $L^{1}(%
\mathbb{R}^{n})$ related to a.e. convergence of $S_{R}^{(n-1)/2,2}(f)(x)$
thus is an interesting problem. Please see the related work of Stein \cite%
{S4}, R. Fefferman \cite{F}, Lu, Taibleson and Weiss \cite{LTW}, Lu and Wang
\cite{LW}.

In order to describe our motivation, we mention two theorems related to this paper. Stein, Taibleson and G. Weiss \cite{STW} considered the
boundedness of $S_{R}^{\delta ,2}$ on the Hardy space $H^{p}(\mathbb{R}^{n})$
as follows:\newline
\noindent \textbf{Theorem A.} \ Let $0<p<1$ and $\delta _{p}=n/p-(n+1)/2$.
If $f\in H^{p}(\mathbb{R}^{n})$, then
\begin{equation*}
\left\vert \left\{ x\in \mathbb{R}^{n}:S_{\ast }^{\delta
_{p},2}(f)(x)>s\right\} \right\vert \preceq \left( \frac{\Vert f\Vert
_{H^{p}(\mathbb{R}^{n})}}{s}\right) ^{p},
\end{equation*}%
for all $s>0.$ Thus, $\lim_{R\rightarrow \infty }S_{R}^{\delta
_{p},2}f(x)=f(x)$, a.e. for any $f\in H^{p}(\mathbb{R}^{n})$.

Let  $m$ be a positive integer and let $ L_m^{1}(\mathbb{R}^{n})$  denote the inhomogeneous Sobolev space of
all functions  $f$ satisfying  $\partial^{\alpha}f\in  L^{1}(\mathbb{R}^{n})$ for all multi-indicies
$\alpha$ with $|\alpha|\leq m$. Wang \cite{Ws} showed
the convergence and approximation for functions in spaces $L_m^{1}(\mathbb{R}^{n})$ by
Bochner-Riesz means $S_{R}^{\delta ,2}$ below the critical index.

\noindent \textbf{Theorem B.} \ Suppose $f\in  L_m^{1}(\mathbb{R}^{n})$,
$m\in \mathbb{N}\backslash \{0\}$ and ${(n-1)}/{2}-m\geq
-1 $. Then
\begin{equation*}
(S_{R}^{\delta ,2}f)(x)-f(x)=\left\{
\begin{array}{lll}
o\left( R^{\frac{n-1}{2}-m-\delta }\right) , & \frac{n-1}{2}-m<\delta <\frac{%
n-1}{2}-m+1, & m\geq 1, \\
o\left( R^{-1}\log R\right) , & \delta =\frac{n-1}{2}-m+1, & m\geq 1, \\
o\left( R^{\frac{n-1}{2}-m-\delta }\right) , & \frac{n-1}{2}-m+1<\delta <%
\frac{n-1}{2}-m+2, & m\geq 2, \\
O\left( R^{-2}\log R\right) , & \delta =\frac{n-1}{2}-m+2, & m\geq 2, \\
O\left( R^{-2}\right) , & \delta >\frac{n-1}{2}-m+2, & m\geq 2,%
\end{array}%
\right.
\end{equation*} in the sense of almost everywhere convergence.

 In our recent work \cite{FZ}, we study the convergence of the generalized Bochner-Riesz means
 $S_{R}^{\delta,\gamma }$ with $\delta=(n-1)/2$ on the
block-Sobolev spaces. The relation between the smoothness imposed on blocks and the rate of almost everywhere convergence of the generalized
Bochner-Riesz means at the critical index is given.
This furnishes us a motivation for studying the generalized Bochner-Riesz
means on some corresponding Sobolev type spaces. We want mention that the study of generalized Bochner-Riesz means $S_{R}^{\delta,\gamma }$ is
not merely a simple extension of using $\gamma$  to replace 2, and it is naturally raised from the approximation theory in order to enhance the saturation of the operator (see Theorem \ref{t3}). Our aim of this paper is not only to analyze the convergence of the Bochner-Riesz means $%
S_{R}^{\delta ,\gamma }f$ on certain Sobolev type spaces including $L^{p}$%
-Sobolev spaces and $H^{p}$-Sobolev spaces, but also to obtain the relation
between the smoothness imposed on functions and the rate of almost
everywhere convergence of $S_{R}^{\delta ,\gamma }f$.

Let $I_{\lambda }$ denote the Riesz potential operators of
order $\lambda$ on $\mathbb{R}^{n}$
for $\lambda \in \mathbb{R}$, which may act on functions or tempered
distributions. The Fourier transform of a Schwartz function (or even a
tempered distribution $f$) satisfies $(I_{\lambda }f)^{\wedge }(\xi )=|\xi
|^{-\lambda }\widehat{f}(\xi )$. If $X$ is any function space or a space of
tempered distributions, one can define Sobolev spaces based on $X$ using $%
I_{\lambda }$, $I_{\lambda }(X)$, to be the image of $X$ under $I_{\lambda }$%
. By this definition, $I_{\lambda }(L^{p})(\mathbb{R}^{n})$ are the
classical homogeneous Sobolev spaces for $p\geq 1$ (see \cite[p.16]{G1}), and $I_{\lambda }(H^{p})(\mathbb{R}%
^{n})$ are the Hardy-Sobolev spaces for $0<p\leq 1$ (see \cite{Str}). We recall that this
notation $I_{\lambda }(X)$ was used by Strichartz in \cite{Str1,Str}. By the
definition (following the terminology in \cite{Str}), $f\in I_{\lambda
}(H^{p})(\mathbb{R}^{n})$ if and only if $I_{-\lambda }f\in H^{p}(\mathbb{R}%
^{n})$.  It follows from Chapter 3 in \cite[p.338]{S2} that $g\in H^{p}(\mathbb{R}^{n})$  has an atomic
decomposition
\begin{equation*}
g=\sum_{k}c_{k}a_{k},\quad
\Vert g\Vert _{H^{p}(\mathbb{R}^{n})}^{p}\approx \sum_{k}|c_{k}|^{p}
<\infty,
\end{equation*}%
where each $a_{k}$ is a $(p,2)$-atom. Here, we call a function $a$ a $(p,2)$%
-atom, $0<p\leq 1$, if $a$ satisfies:

$(i)$\ $a$ is supported on a cube $Q$; (support condition)

$(ii)$\  $\ \Vert a\Vert _{L^{2}(\mathbb{R}^{n})}\leq |Q|^{1/2-1/p};$ (size condition)

$(iii)$\ $\ \int_{\mathbb{R}^{n}}a(x)x^{\alpha }dx=0,$ $|\alpha |\leq \lbrack
n(1/p-1)].$ (cancellation condition)

\medskip

We now formulate our main results. They are new even when $\gamma=2$.

\begin{thm}
\label{t3} \ Let $0\leq \lambda \leq \gamma $, $0<p<1$ and $\delta
_{p}=n/p-(n+1)/2$. If $f\in I_{\lambda }(H^{p})(\mathbb{R}^{n})$, then for $%
0\leq \lambda <\gamma $,
\begin{equation*}
(S_{R}^{\delta _{p},\gamma }f)(x)-f(x)=o(1/{R^{\lambda }})\ a.e.\ as\
R\rightarrow \infty ;
\end{equation*}%
and for $\lambda =\gamma $,
\begin{equation*}
(S_{R}^{\delta _{p},\gamma }f)(x)-f(x)=O(1/{R^{\gamma }})\ a.e.\ as\
R\rightarrow \infty .
\end{equation*}Moreover, this rate is sharp in the sense that
$O(1/{R^{\gamma }})$ can not be replaced by $o(1/{R^{\gamma }})$.
\end{thm}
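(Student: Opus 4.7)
The plan is to rescale the difference operator, extract a uniformly bounded Fourier multiplier, and combine a weak-type $(H^p,p)$ maximal estimate with pointwise convergence on a dense class, following the strategy behind Theorem A. Writing $f = I_\lambda g$ with $g \in H^p(\mathbb{R}^n)$ and setting
$$T_R f(x) := R^\lambda \bigl[(S_R^{\delta_p,\gamma}f)(x) - f(x)\bigr],$$
a direct computation gives $(T_R f)^\wedge(\xi) = m(\xi/R)\widehat{g}(\xi)$, where
$$m(\eta) := \bigl[(1-|\eta|^\gamma)_+^{\delta_p} - 1\bigr]|\eta|^{-\lambda}.$$
The Taylor expansion $(1-|\eta|^\gamma)^{\delta_p} - 1 = -\delta_p|\eta|^\gamma + O(|\eta|^{2\gamma})$ near $\eta=0$, together with $\lambda \leq \gamma$, ensures $m$ is uniformly bounded on $\mathbb{R}^n$. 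Moreover, for every fixed $\xi\neq 0$, $m(\xi/R)\to 0$ when $\lambda<\gamma$ and $m(\xi/R)\to -\delta_p$ when $\lambda=\gamma$ as $R\to\infty$.

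\textbf{Maximal estimate.} Setting $T^*f(x) := \sup_{R>0}|T_R f(x)|$, I would establish the weak-type bound
$$\bigl|\{x\in\mathbb{R}^n : T^*f(x) > s\}\bigr| \leq C\, s^{-p}\, \|g\|_{H^p(\mathbb{R}^n)}^p, \qquad s>0.$$
Via the atomic decomposition of $g$, it suffices to prove the uniform estimate $\|T^*(I_\lambda a)\|_{L^{p,\infty}}^p \lesssim 1$ for every $(p,2)$-atom $a$ supported in a cube $Q = Q(x_Q,r_Q)$. This is derived from pointwise bounds on the rescaled convolution kernel $K_R(x) = R^n K(Rx)$, $K := \mathcal{F}^{-1}(m)$, obtained by decomposing $m$ into a smooth interior piece and a surface-concentrated piece near $|\eta|=1$, and applying integration by parts away from the sphere together with stationary phase near it. These give the critical-index decay $|K(x)|\lesssim (1+|x|)^{-n/p}$ along with the requisite derivative bounds. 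The atomic contribution is then split into the local region $x \in 2Q$, handled by Cauchy--Schwarz with Plancherel, and the tail $x \in (2Q)^c$, handled by the kernel decay after invoking the cancellation of $a$ up to order $[n(1/p-1)]$ through a Taylor expansion of $K_R(x-\cdot\,)$ around $x_Q$.

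\textbf{Convergence and sharpness.} For any Schwartz $g_0 \in H^p$ (a dense class), the representation $T_R(I_\lambda g_0)(x) = \int m(\xi/R)\widehat{g_0}(\xi)\, e^{2\pi i x\cdot\xi}\, d\xi$, combined with $|m(\xi/R)|\leq C$ and $\widehat{g_0}\in\mathscr{S}(\mathbb{R}^n)$, allows the dominated convergence theorem to give, for each $x$, $T_R(I_\lambda g_0)(x)\to 0$ when $\lambda<\gamma$ and $T_R(I_\lambda g_0)(x)\to -\delta_p g_0(x)$ when $\lambda=\gamma$. For arbitrary $f = I_\lambda g\in I_\lambda(H^p)$, choose Schwartz approximants $g_n \to g$ in $H^p$; then
$$\limsup_{R\to\infty}|T_R f(x)| \leq \limsup_{R\to\infty}|T_R(I_\lambda g_n)(x)| + T^*(I_\lambda(g-g_n))(x),$$
and the weak-type bound drives the second term to zero in measure as $n\to\infty$. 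This yields $T_R f \to 0$ a.e.\ when $\lambda<\gamma$ and $\sup_R|T_R f| < \infty$ a.e.\ when $\lambda=\gamma$, proving the two claimed rates. For the sharpness at $\lambda=\gamma$, take any nonzero $(p,2)$-atom $g$ and set $f = I_\gamma g$; the Fourier-side dominated convergence computation yields $T_R f \to -\delta_p g$ in $L^2$, so along a subsequence $T_R f(x)\to -\delta_p g(x)\neq 0$ on the positive-measure set $\{g\neq 0\}$, precluding the rate $o(1/R^\gamma)$.

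\textbf{Main obstacle.} The main technical difficulty is the critical-index kernel estimate and the resulting weak-type $(H^p,p)$ bound for $T^*$: the symbol $m$ carries the same boundary singularity near $|\eta|=1$ as the classical Bochner--Riesz multiplier at the critical index, and the extra factor $|\eta|^{-\lambda}$ affects only the low-frequency smoothness, but balancing the cancellation moments of $a$ against the derivative bounds on $K$ in the tail estimate requires careful bookkeeping that mirrors and extends the analysis behind Theorem A.
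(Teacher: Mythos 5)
Your proposal is correct and follows essentially the same route as the paper: reduce to the multiplier $m(\xi/R)$ acting on $g=I_{-\lambda}f$, prove a weak-type $(H^p,L^{p,\infty})$ bound for the maximal operator via atomic decomposition and kernel estimates (the paper's Theorem \ref{t2}), combine it with pointwise convergence on a dense Schwartz subclass, and obtain sharpness from the nonzero limit $-\delta_p I_{-\gamma}f$ (you via $L^2$ convergence plus an a.e.\ subsequence, the paper via a direct Fourier-side Taylor expansion for a suitable Schwartz $f$ --- both valid). One small caveat on your sketch of the maximal estimate: the claimed uniform bound $|K(x)|\preceq (1+|x|)^{-n/p}$ fails near the origin because the high-frequency part of the multiplier, $\Psi_\infty(\xi)|\xi|^{-\lambda}$, produces a kernel singular like $|x|^{\lambda-n}$ there; the paper isolates this piece and handles it with separate lemmas (Lemmas \ref{lem3} and \ref{lem111}), though this does not affect the validity of your overall argument for the theorem.
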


Let ${M}_{\lambda }^{\delta_p ,\gamma }f$ be the maximal function defined by
\begin{equation*}
({M}_{\lambda }^{\delta_p ,\gamma }f)(x)=\sup_{R>0}|R^{\lambda
}\{(S_{R}^{\delta_p ,\gamma }f)(x)-f(x)\}|.
\end{equation*}%
To prove Theorem \ref{t3}, we will need to make use of the following
estimate on ${M}_{\lambda }^{\delta_p ,\gamma }f$.

\begin{thm}
\label{t2} \ Let $0\leq \lambda\leq \gamma$, $0<p<1$ and $%
\delta_p=n/p-(n+1)/2$. If $f\in I_{\lambda}(H^p)(\mathbb{R}^n)$, then
\begin{equation*}
\left|\left\{x\in\mathbb{R}^n: ({M}_{\lambda }^{\delta_p ,\gamma
}f)(x)>s\right\}\right|\preceq \left(\frac{\|f\|_{I_{\lambda}(H^p)(\mathbb{R}%
^n)}}{s}\right)^p,
\end{equation*}
for all $s>0$.
\end{thm}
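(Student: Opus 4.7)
The plan is to prove the theorem via the $(p,2)$-atomic decomposition of $H^p$. Setting $g=I_{-\lambda}f$, the hypothesis $f\in I_\lambda(H^p)(\mathbb{R}^n)$ means $g\in H^p$ with $\|g\|_{H^p}\approx\|f\|_{I_\lambda(H^p)}$, so $g$ admits a representation $g=\sum_k c_k a_k$ with $(p,2)$-atoms $a_k$ and $\sum_k|c_k|^p\lesssim\|g\|_{H^p}^p$; accordingly $f=\sum_k c_k I_\lambda a_k$. Since $0<p<1$, the elementary inequality $(\sum_k t_k)^p\le\sum_k t_k^p$ together with the sublinearity of $M_\lambda^{\delta_p,\gamma}$ gives
\begin{equation*}
|M_\lambda^{\delta_p,\gamma}f(x)|^p\le\sum_k|c_k|^p|M_\lambda^{\delta_p,\gamma}(I_\lambda a_k)(x)|^p,
\end{equation*}
so the theorem will follow, via Chebyshev's inequality, from the uniform atomic $L^p$-bound $\int_{\mathbb{R}^n}|M_\lambda^{\delta_p,\gamma}(I_\lambda a)|^p\,dx\le C$ valid for every $(p,2)$-atom $a$.

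Fix such an atom $a$ supported on $Q=Q(x_0,r)$ and set $Q^*$ to be a fixed dilate of $Q$ (say $2\sqrt{n}\,Q$). I split $\int_{\mathbb{R}^n}=\int_{Q^*}+\int_{(Q^*)^c}$. For the local piece, H\"older's inequality, combined with the $L^2$-bound $\|M_\lambda^{\delta_p,\gamma}(I_\lambda a)\|_{L^2}\lesssim\|a\|_{L^2}$ and the atomic size condition $\|a\|_{L^2}\le|Q|^{1/2-1/p}$, yields $\int_{Q^*}\le C$. The $L^2$-bound I would establish by viewing $R^\lambda(S_R^{\delta_p,\gamma}\circ I_\lambda-I_\lambda)$ as a maximal Fourier multiplier whose rescaled symbol is
\begin{equation*}
m_1(\eta)=|\eta|^{-\lambda}\bigl[(1-|\eta|^\gamma)_+^{\delta_p}-1\bigr],
\end{equation*}
observing that the hypothesis $0\le\lambda\le\gamma$ forces $m_1(\eta)=O(|\eta|^{\gamma-\lambda})$ near the origin, so $m_1\in L^\infty(\mathbb{R}^n)$, from which one derives the desired maximal $L^2$-bound.

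For the far piece I would invoke the vanishing moments $\int a(y)y^\alpha\,dy=0$ for $|\alpha|\le N:=[n(1/p-1)]$. Writing the operator as convolution with the dilated kernel $K_R(z)=R^n K_1(Rz)$, $K_1=(m_1)^\vee$, and subtracting from $K_1(R(x-\cdot))$ its degree-$N$ Taylor polynomial centered at $x_0$, the Taylor remainder combined with a suitable decay estimate for $\nabla^{N+1}K_1$ would produce, after taking $\sup_{R>0}$, a pointwise bound of the shape
\begin{equation*}
M_\lambda^{\delta_p,\gamma}(I_\lambda a)(x)\lesssim r^{N+1+n(1-1/p)}|x-x_0|^{-(n+N+1)},\qquad x\in(Q^*)^c.
\end{equation*}
Since $N+1>n(1/p-1)$, raising to the $p$-th power and integrating yields $\int_{(Q^*)^c}\le C$, completing the atomic estimate.

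The main technical obstacle is the decay analysis of $K_1$. Since $m_1$ is neither compactly supported nor smooth, the decay of $K_1$ does not follow from a direct Bochner--Riesz argument: the compactly supported piece $|\xi|^{-\lambda}(1-|\xi|^\gamma)_+^{\delta_p}$ is a Riesz potential of the (unit-mass) Bochner--Riesz kernel and contributes a leading term $c_{\lambda,n}|x|^{\lambda-n}$ at infinity, while the Riesz tail $-|\xi|^{-\lambda}$ contributes the opposite $-c_{\lambda,n}|x|^{\lambda-n}$. The key observation that rescues the argument is the exact cancellation of these leading asymptotics, so that $K_1$ in fact decays genuinely faster than $|x|^{\lambda-n}$. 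Propagating this cancellation through each derivative appearing in the Taylor remainder, and treating separately the two regimes $R|x-x_0|\le1$ and $R|x-x_0|>1$ so that the outer factor $R^{\lambda}$ is absorbed uniformly in $R$, constitutes the delicate computational heart of the proof.
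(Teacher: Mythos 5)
Your overall architecture (atomic decomposition of $g=I_{-\lambda}f$, split into a local piece over $Q^*$ handled by $L^2$ and a far piece handled by Taylor expansion against the vanishing moments) is the right shape and is what the paper does, but several of the steps you sketch would fail as stated. First, the claim that $m_1\in L^\infty$ yields the \emph{maximal} $L^2$-bound $\|\sup_{R>0}|T_R g|\|_{L^2}\lesssim\|g\|_{L^2}$ is not correct: boundedness of the symbol gives a uniform $L^2$-bound for each fixed $R$, but the supremum over all $R>0$ is a genuinely harder object. The paper uses maximal multiplier results (its Lemmas \ref{lem1} and \ref{lem111}, imported from \cite{FZ}) for the annular and far pieces, and the near-origin piece is not treated by $L^2$ at all but by Stein's maximal theorem for $H^p$ (Lemma \ref{lem4}) applied to the kernel decay estimate of Lemma \ref{lem2}.

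Second, and more seriously, the claimed decay $|\nabla^{N+1}K_1(u)|\lesssim|u|^{-(n+N+1)}$ does not hold. Regardless of the cancellation you correctly identify between the $|\xi|^{-\lambda}$ Riesz tail and the compactly supported Bochner--Riesz bump, the boundary singularity of $(1-|\xi|^\gamma)_+^{\delta_p}$ at $|\xi|=1$ produces oscillatory decay at the rate $|u|^{-(n+1)/2-\delta_p}=|u|^{-n/p}$, and adding derivatives does not improve this rate (see Lemma \ref{lem21}: $|\partial_x^{\beta}K_{\lambda,1}^{\delta_p,\gamma}(x)|\lesssim|x|^{-n/p}$ for all $\beta$). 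Since $n/p<n+N+1$, your pointwise bound and hence your strong atomic estimate $\int_{(Q^*)^c}|M_\lambda^{\delta_p,\gamma}(I_\lambda a)|^p\,dx\le C$ cannot be established; the correct bound outside $Q^*$ is only $|x-x_0|^{-n/p}$, which is not in $L^p$ near infinity and gives only a \emph{weak-type} atomic estimate $|\{x:\sup_R|T_R b(x)|>s\}|\lesssim s^{-p}$. But once you settle for weak-type at the atomic level, the $p$-triangle inequality plus Chebyshev no longer suffices: weak-$L^p$ is not even a quasi-norm in a way compatible with this argument, and you need the Stein--Weiss summation lemma for weak-type $p<1$ estimates (the paper cites \cite{SW2} for precisely this step). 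Finally, working with the undecomposed symbol $m_1$ forces you to cope simultaneously with the singularity of $K_1$ at the origin (from the Riesz tail, giving $|K_1(u)|\sim|u|^{\lambda-n}$ near $0$), the slow Bochner--Riesz decay, and the cancellation near infinity; the paper's three-piece decomposition $m_{\lambda,0}+m_{\lambda,1}+m_{\lambda,\infty}$ isolates each of these phenomena so that each kernel has clean, separately usable estimates.
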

It is worth pointing out that if $\lambda=0$, one may follow the method of \cite{STW} to yield that
for all $s>0$ \begin{equation*}
\left\vert \left\{ x\in \mathbb{R}^{n}:S_{\ast }^{\delta
_{p},\gamma}(f)(x)>s\right\} \right\vert \preceq \left( \frac{\Vert f\Vert
_{H^{p}(\mathbb{R}^{n})}}{s}\right) ^{p},\  \gamma\geq 0
\end{equation*} whenever $f\in H^{p}(\mathbb{R}^{n})$. Thus Theorem \ref{t2}
holds for the case $\lambda=0$ on account of  Chebyshev's inequality and the following fact
for $0<p<\infty$\begin{equation*}
\|f\|_{L^p(\mathbb{R}^n)}=\|\lim_{t\rightarrow 0^{+}}(e^{-t|x|^2}\ast f)\|_{L^p(\mathbb{R}^n)}
\leq \|f\|_{H^p(\mathbb{R}^n)}.
\end{equation*}
%
%
% that a distribution $f$ which is restricted at
%infinity, $f\in H^p(\mathbb{R}^n)$ $(1-1/n<p<\infty)$ if and only only $R_jf$ and $f$ belong to $L^p(\mathbb{R}^{n})$
%in the sense
%\begin{align*}\|f\ast \varphi_\varepsilon\|_{L^p(\mathbb{R}^{n})}+\sum_{j=1}^n\|(R_jf)\ast
%\varphi_\varepsilon\|_{L^p(\mathbb{R}^{n})}\leq C,
%\ \text{all}\ \varepsilon>0,
%\end{align*} where $\varphi\in\mathscr{S}(\mathbb{R}^{n}) $ with $\int_{\mathbb{R}^n}\varphi(x)dx=1$, $\varphi_{\varepsilon}=\varepsilon^{-n}\varphi(x/\varepsilon)$,
%and the Riesz transforms $R_j$, $j=1,\ldots,n$ is given by
% $R_j f=f\ast K^{(j)}$, where $$K^{(j)}=c_n\frac{x_j}{|x|^{n+1}}.$$

\medskip For the case  $0<\delta <\frac{n-1}{2},$ we use a different approach
from \cite{Ws} and obtain the following
result.

\begin{thm}\label{ts}
\label{t1} Let $0<\delta <(n-1)/2$, $1\leq p<\frac{2n}{n-1-2\delta} $  and ${(n-1)%
}/2-\delta <\lambda \leq \gamma $.  If $f\in I_{\lambda }(L^{p})(%
\mathbb{R}^{n})$, then we have
\begin{equation*}
(S_{R}^{\delta ,\gamma }f)(x)-f(x)=o(1/{R^{\lambda +\delta -\frac{n-1}{2}}}%
)\ a.e.\ as\ R\rightarrow \infty .
\end{equation*}
\end{thm}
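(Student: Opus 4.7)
\emph{Proof plan.} The plan is to combine a weak-type maximal function estimate with pointwise convergence on a dense subclass. Writing $f=I_{\lambda}g$ with $g\in L^{p}(\mathbb{R}^{n})$, a Fourier-side computation identifies
\begin{equation*}
R^{\lambda+\delta-(n-1)/2}\bigl[S_{R}^{\delta,\gamma}f(x)-f(x)\bigr]=(T_{R}^{\lambda}g)(x),
\end{equation*}
where $T_{R}^{\lambda}$ is the radial Fourier multiplier with symbol
\begin{equation*}
m_{R}(\xi)=R^{\delta-(n-1)/2}\,\Psi(|\xi|/R),\qquad \Psi(t)=t^{-\lambda}\bigl[(1-t^{\gamma})_{+}^{\delta}-1\bigr].
\end{equation*}
Under the hypotheses $0<\delta<(n-1)/2$ and $(n-1)/2-\delta<\lambda\leq\gamma$, one checks that $\Psi$ is bounded and continuous on $[0,\infty)$, behaves like $-\delta\,t^{\gamma-\lambda}$ near $t=0$, equals $-t^{-\lambda}$ for $t\geq 1$, and carries only a Bochner-Riesz type branching at $t=1$. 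Its convolution kernel $K_{\Psi}$ therefore satisfies the usual stationary-phase estimate $|K_{\Psi}(x)|\leq C(1+|x|)^{-(n+1)/2-\delta}$, so the convolution kernel of $m_{R}$ is $R^{(n+1)/2+\delta}K_{\Psi}(R\,\cdot)$.

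The main step is to establish the weak-type maximal inequality
\begin{equation*}
\left|\left\{x\in\mathbb{R}^{n}:\sup_{R>0}|(T_{R}^{\lambda}g)(x)|>s\right\}\right|\leq C\,s^{-p}\,\|g\|_{L^{p}(\mathbb{R}^{n})}^{p},\qquad 1\leq p<\frac{2n}{n-1-2\delta}.
\end{equation*}
I would split $\Psi=\Psi_{0}+\Psi_{\infty}$, where $\Psi_{\infty}$ is smooth and equal to $-t^{-\lambda}$ for $t\geq 2$; the contribution of $\Psi_{\infty}$ to the maximal operator is dominated pointwise by the Hardy-Littlewood maximal function of $g$ after absorbing the $|\xi|^{-\lambda}$ factor into a Bessel-type smoothing kernel. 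The compactly supported remainder $\Psi_{0}$ is of Bochner-Riesz type at index $\delta$: the $L^{p}$-multiplier norm of $\Psi_{0}(\cdot/R)$ is dilation-invariant and finite precisely on the range $p<2n/(n-1-2\delta)$, so the scaling factor $R^{\delta-(n-1)/2}$ in $m_{R}$ keeps the family $\{T_{R}^{\lambda}\}_{R>0}$ uniformly bounded on $L^{p}$. A standard interpolation with the $L^{2}$ endpoint (where $\sup_{R}|m_{R}(\xi)|\leq C|\xi|^{\delta-(n-1)/2}$) together with a $g$-function/Littlewood-Paley argument then yields the maximal estimate.

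For the density step, if $g\in\mathscr{S}(\mathbb{R}^{n})$ and $f=I_{\lambda}g$, integration by parts in $\xi$ using the rapid decay of $\widehat{g}$ and the boundedness of $\Psi$ gives $(S_{R}^{\delta,\gamma}f-f)(x)=O(R^{-N})$ for every $N$, which is \emph{a fortiori} $o(R^{-(\lambda+\delta-(n-1)/2)})$ pointwise. Since $I_{\lambda}(\mathscr{S})$ is dense in $I_{\lambda}(L^{p})$, the standard three-$\varepsilon$ argument combined with the maximal estimate extends the $o$-rate to every $f\in I_{\lambda}(L^{p})$. The main obstacle is the weak-type bound for the Bochner-Riesz-like piece $\Psi_{0}$: the constraint $p<2n/(n-1-2\delta)$ matches exactly the classical Bochner-Riesz $L^{p}$ range at the sub-critical index $\delta$, and the smoothness hypothesis $\lambda>(n-1)/2-\delta$ is precisely what is needed for the loss $R^{\delta-(n-1)/2}$ to be compensated by the Bochner-Riesz scaling, yielding a uniformly bounded maximal operator.
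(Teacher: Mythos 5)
Your overall architecture (a maximal estimate plus a density argument on $I_{\lambda}(\mathscr{S})$) is the same as the paper's, and the kernel decay $|K(x)|\preceq (1+|x|)^{-(n+1)/2-\delta}$ for the piece of the symbol near $|\xi|=R$ is indeed the key input. But your main step is formulated incorrectly, in two related ways. First, the claimed weak-type $(p,p)$ inequality for $\sup_{R}|T_{R}^{\lambda}g|$ cannot hold: a kernel decaying like $|x|^{-(n+1)/2-\delta}=|x|^{-(n-\alpha)}$ with $\alpha=(n-1)/2-\delta>0$ is a fractional-integration kernel, not integrable at infinity, so the associated maximal operator is controlled by the fractional maximal function $M_{\alpha}$ and the Riesz potential $I_{\alpha}$, which map $L^{p}$ into $L^{q,\infty}$ with $1/p-1/q=(n-1-2\delta)/(2n)$ — not into $L^{p,\infty}$. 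This is exactly what the paper does (its Theorem 4.1 dominates $\mathcal{M}_{\lambda}^{\delta,\gamma}f$ pointwise by $M_{(n-1)/2-\delta}g+I_{(n-1)/2-\delta}(|g|)+Mg$ and then invokes the weak $(p,q)$ bounds); the restriction $p<2n/(n-1-2\delta)$ in the theorem is the Sobolev-exponent condition $q<\infty$, not a Bochner--Riesz summability range. A weak $(p,q)$ bound is all the density argument needs, so the statement of your target inequality must be changed, not just its proof.

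Second, your proposed justification — that ``the $L^{p}$-multiplier norm of $\Psi_{0}(\cdot/R)$ is dilation-invariant and finite precisely on the range $p<2n/(n-1-2\delta)$'' — is false and, even where conjecturally true, unavailable. The $L^{p}$-boundedness of the Bochner--Riesz multiplier at index $\delta$ on that range is the open Bochner--Riesz conjecture for $n\geq 3$; worse, the necessary condition is two-sided, $2n/(n+1+2\delta)<p<2n/(n-1-2\delta)$, so for $p$ near $1$ (which the theorem allows, and where $\delta<(n-1)/2$ forces failure of the lower constraint) the multiplier is simply not $L^{p}$-bounded, and the prefactor $R^{\delta-(n-1)/2}$ does not repair an infinite operator norm. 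The correct route from your own kernel estimate is elementary: split the convolution into $|y|<1/R$ and dyadic annuli $2^{k}/R\leq|y|<2^{k+1}/R$ and sum, which yields the pointwise domination by $M_{(n-1)/2-\delta}g+I_{(n-1)/2-\delta}(|g|)+Mg$ directly, with no multiplier theorem or Littlewood--Paley argument needed. (Minor additional points: the global bound $|K_{\Psi}(x)|\leq C(1+|x|)^{-(n+1)/2-\delta}$ fails near $x=0$, where the tail $-t^{-\lambda}$ of the symbol produces a singularity $|x|^{\lambda-n}$ — integrable, hence harmless, but it must be treated separately as the paper does in Lemma 4.3; and the supremum should be taken over $R>1$, since for small $R$ the factor $R^{\lambda+\delta-(n-1)/2}$ is not the issue but the quantitative rate statement only concerns $R\to\infty$.)
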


Here we make some further comments about Theorem \ref{ts}.
If $p=1$, then the inhomogeneous space $L_{m}^1(\mathbb{R}^n)$ is a proper subspace of the space $I_{m}(L^1)(\mathbb{R}^n)$ when $m$ is even. So we may compare
Theorem B with Theorem \ref{ts} ($\gamma=2$) when $m$ is even and
$0<\frac{n-1}{2}-\delta\leq 2$. If $m+\delta-\frac{n-1}{2}=1$ and $f\in L_m^1(\mathbb{R}^n)$, then
Theorem B gives the rate $o(R^{-1}\log R)$ while the rate in Theorem \ref{ts} is $o(R^{-1})$.
Clearly, this theorem is a substantial extension of Theorem B when $0<\delta<{(n-1)}/2$.

Our idea is mainly inspired from our previous work \cite{FZ}. Precisely,
observe that the Fourier transform of $R^{\lambda }\{(S_{R}^{\delta ,\gamma
}f)-f\}$ is $\mu (\cdot /R)$ $\widehat{g},$ where $g=I_{-\lambda }f$ and
the multiplier $\mu $ is given by
\begin{equation*}
\mu (\xi )=\frac{(1-\left\vert \xi \right\vert ^{\gamma })_{+}^{\delta }-1}{%
\left\vert \xi \right\vert ^{\lambda }},\ \xi \neq 0\text{ \ and }\mu
(0)=\lim_{t\rightarrow 0^{+}}\frac{(1-t^{\gamma })_{+}^{\delta }-1}{%
t^{\lambda }}.
\end{equation*}%
Hence we will decompose the multiplier $\mu $ as a sum of \ $\mu _{0},$ $\mu
_{1},$ $\mu _{\infty },$ centralizing at $0,$ $1$ and near $\infty ,$
respectively. Then each corresponding kernel will be carefully estimated.

The rest of the paper is organized as follows. In Section 2, we define some necessary
notations and definitions that will be used throughout this paper. In this section, we also
prove three lemmas which will be used in the next section. In Section 3, we prove
 the almost everywhere convergence of the Bochner-Riesz means $%
S_{R}^{\delta_p ,\gamma }f$ on $H^{p}$-Sobolev spaces, where $0<p<1$. In the last section,
we present the almost everywhere convergence of the Bochner-Riesz means
$S_{R}^{\delta ,\gamma }f$ on  $L^{p}$-Sobolev spaces below the critical index for
$1\leq p<\frac{2n}{n-1-2\delta}$.

Throughout this article, we will use the symbol $A\preceq B$ to mean that
there exists a constant $C>0$ independent of all essential variables such
that $A\leq CB$. We also use the notation $A\simeq B$ if $A\preceq B$ and $%
B\preceq A$. The symbol $A\approx B$ means that there exists a constant $C$ independent of all essential variables such
that $A= CB$.

\section{Some Preliminaries}

\label{s2} We introduce the requisite notation. For $x=(x_1,x_2,\ldots,
x_n)\in \mathbb{R}^n$, the first partial derivative of a function $f$ on $%
\mathbb{R}^n$ with respect to the $j$th variable $x_j$ is denoted by $%
\partial_j f$ while the $m$th partial derivative with respect to the $j$th
variable is denoted by $\partial_j^m f$. A multi-index $\alpha$ is an
ordered $n$-tuple of nonnegative integers. For a multi-index $%
\alpha=(\alpha_1,\ldots, \alpha_n)$, $\partial_{x}^{\alpha} f$ or $\partial^{\alpha} f$
 denotes the derivative $\partial_1^{\alpha_1}\cdots \partial_n^{\alpha_n} f$. Let $%
|\alpha|=\alpha_1+\ldots+\alpha_n$ and $\alpha!=\alpha_1 ! \cdots \alpha_n!$%
. The number $|\alpha|$ indicates the total order of differentiation of $%
\partial_{x}^{\alpha}f$. For $x\in \mathbb{R}^n$ and $\alpha=(\alpha_1,%
\ldots, \alpha_n)$ a multi-index, we set $x^{\alpha}=x_1^{\alpha_1}\cdots
x_n^{\alpha_n}$. Let $\beta=(\beta_1,\ldots,\beta_n)$ be a multi-index. The
notation $\alpha\leq \beta$ means that $\beta$ ranges over all multi-indices
satisfying $0\leq \alpha_j\leq \beta_j $ for all $1\leq j\leq n$. For more
details, see \cite[p.108]{G1}. Let $[r]$ be the greatest integer less than or equal to the real number $r$.

Let $\phi_{0}, \phi_{1}, \phi_{\infty}\in C^{\infty}(\mathbb{R}^n)$ be
radial functions satisfying the following conditions: $(i) \ \phi_0$ is
supported on the set $\{\xi \in \mathbb{R}^n: |\xi|<1/2\}$, $\phi_1$ is
supported on the annulus $\{\xi \in \mathbb{R}^n: 1/4<|\xi|< 2\}$, and $%
\phi_\infty$ is supported on the set $\{\xi\in \mathbb{R}^n: |\xi|>3/2\}$; $%
(ii)\ \phi_{0}(\xi)+\phi_{1}(\xi)+\phi_{\infty}(\xi)=1,$ $0\leq
\phi_{j}(\xi)\leq 1$ for $j=0,1, \infty$, and
\begin{align*}
\phi_0(\xi)=\left\{
\begin{array}{ll}
1 & |\xi|\leq {1}/{4}, \\
0 & |\xi|\geq {1}/{2};%
\end{array}%
\right. \quad \phi_1(\xi)=\left\{
\begin{array}{ll}
1 & {1}/{2}\leq |\xi|\leq {3}/{2}, \\
0 & |\xi|\leq {1}/{4}\ \text{and}\ |\xi|\geq 2;%
\end{array}%
\right.
\end{align*}%
\begin{align*}
\phi_\infty(\xi)=\left\{
\begin{array}{ll}
1 & |\xi|\geq2, \\
0 & |\xi|\leq {3}/{2}.%
\end{array}%
\right.
\end{align*}
Let $\Psi_{\infty}(\xi)=\phi_1(\xi)+\phi_{\infty}(\xi)$. Obviously, $%
\Psi_{\infty}\in C^{\infty}(\mathbb{R}^n)$ is supported on the set $\{\xi:
|\xi|>1/4\}$ satisfying $0\leq \Psi_{\infty}(\xi)\leq 1$ and $%
\Psi_{\infty}(\xi)=1$ on the set $\{\xi: |\xi|\geq 1/2\}$.

Assume that the kernel $B_{R}^{\delta ,\gamma }$ of $S_{R}^{\delta ,\gamma }$ is defined
by
\begin{equation*}
B_{R}^{\delta ,\gamma }(x)=\int_{\mathbb{R}^{n}}\left( 1-\frac{|\xi
|^{\gamma }}{R^{\gamma }}\right) _{+}^{\delta }e^{2\pi i\xi \cdot x}d\xi .
\end{equation*}%
We rewrite $S_{R}^{\delta ,\gamma }$ as a convolution operator
\begin{equation*}
S_{R}^{\delta ,\gamma }f=B_{R}^{\delta ,\gamma }\ast f.
\end{equation*}%
Let $g=:I_{-\lambda }f$. Then the Fourier transform of the function $%
R^{\lambda }\{(S_{R}^{\delta ,\gamma }f)-f\}$ can be written as $%
m_{\lambda ,0}^{\delta,\gamma }(\cdot /R)\widehat{g}+m_{\lambda
,1}^{\delta ,\gamma }(\cdot /R)\widehat{g}-m_{\lambda ,\infty }^{\delta
,\gamma }(\cdot /R)\widehat{g}$, where
\begin{align*}
& m_{\lambda ,0}^{\delta ,\gamma }(\xi )=\frac{\phi _{0}\left( {\xi }%
\right) }{|\xi |^{\lambda }}\left( \left( 1-{|\xi |^{\gamma }}\right)
_{+}^{\delta}-1\right) ,m_{\lambda ,1}^{\delta ,\gamma }(\xi )=%
\frac{\phi _{1}\left( {\xi }\right) }{|\xi |^{\lambda }}\left( 1-|\xi
|^{\gamma }\right) _{+}^{\delta},\text{ \ \ \ \ \ } \\
& m_{\lambda ,\infty }^{\delta,\gamma }(\xi )=\frac{\Psi _{\infty }(\xi
)}{|\xi |^{\lambda }}.
\end{align*} Here and in what follows, for simplicity, we define the
operators
\begin{equation*}
(T_{R,\lambda ,j}^{\delta,\gamma }g)^{\wedge }(\xi )=m_{\lambda
,j}^{\delta ,\gamma }(\xi /R)\widehat{g}(\xi ),
\end{equation*}%
and denote by $(K_{\lambda ,j}^{\delta ,\gamma
})_{1/R}(x)=R^{n}K_{\lambda ,j}^{\delta ,\gamma }(Rx)$ the kernel of $%
T_{R,\lambda ,j}^{\delta,\gamma }$, where
\begin{equation*}
K_{\lambda ,j}^{\delta ,\gamma }(x)={m_{\lambda ,j}^{\delta,\gamma }%
}^{\vee }(x),\ j=0,1,\infty .
\end{equation*}%
Then each operator $T_{R,\lambda ,j}^{\delta ,\gamma }g$ can be written
as a convolution operator:
\begin{equation*}
T_{R,\lambda ,j}^{\delta ,\gamma }g(x)=(K_{\lambda ,j}^{\delta,\gamma })_{1/R}\ast g(x),
\end{equation*}%
for $j=0,1,\infty $.

In the following three lemmas of this section, we will estimate
$\partial_{x}^{\beta}K_{\lambda,j}^{\delta_p,\gamma}(x)$, $j=0,1,\infty $,
where
\begin{equation*}
K_{\lambda ,j}^{\delta _{p},\gamma }(x)={m_{\lambda ,j}^{\delta _{p},\gamma }%
}^{\vee }(x)=\int_{%
%TCIMACRO{\U{211d} }%
%BeginExpansion
\mathbb{R}
%EndExpansion
^{n}}{m_{\lambda ,j}^{\delta _{p},\gamma }}(\xi )e^{2\pi i\xi \cdot x}d\xi,
\end{equation*} here and in what follows, the notation $\delta_p$ is $\delta
_{p}=n/p-(n+1)/2$ for $0<p<1$. To get the desired estimates for the case $j=0, \infty$,
 we consider cones $E_{k},\ k=1,2,\ldots ,n$,
defined by
\begin{equation*}
E_{k}=\{x\in \mathbb{R}^{n}:|x_{k}|^{2}\geq |x|^{2}/{2n}\}
\end{equation*}%
and the smaller cones
\begin{equation*}
E_{k}^{0}=\{x\in \mathbb{R}^{n}:|x_{k}|^{2}\geq |x|^{2}/{n}\}.
\end{equation*}%
Then
\begin{equation*}
\bigcup_{k=1}^{n}E_{k}^{0}=\mathbb{R}^{n}.
\end{equation*}%
Let $\chi _{E_{k}}$ be\ a nonnegative $C^{\infty }(\mathbb{R}^{n})$ function
supported on $E_{k}$ and let $\chi _{E_{k}}(x)=1$ for $x\in E_{k}^{0}$ and $0\leq \chi _{E_{k}}(x)\leq 1$. Clearly,
we have
\begin{equation*}
|\partial_{x}^\beta K_{\lambda ,j}^{\delta _{p},\gamma }(x)|\leq \sum_{k=1}^{n}\left\vert \chi
_{E_{k}}(x)\partial_{x}^\beta \left({m_{\lambda ,j}^{\delta _{p},\gamma }}^{\vee }(x)\right)\right\vert ,\
j=0,\infty .
\end{equation*}

\begin{lem}
\label{lem2} For $0\leq \lambda\leq \gamma$ and $0<p<1$, we have
\begin{equation*}
|\partial_{x}^{\beta}K_{\lambda,0}^{\delta_p,\gamma}(x)|\preceq \frac{1}{%
(1+|x|)^{n+|\beta|+\gamma-\lambda}}.
\end{equation*}
%and for $\lambda=\gamma$,
%\begin{equation*}
%|\partial_{x}^{\beta} K_{\gamma,0}^{\delta_p,\gamma}(x)|\preceq \frac{1}{%
%(1+|x|)^{L+|\beta|}},\ \text{for any}\ L>0.
%\end{equation*}
\end{lem}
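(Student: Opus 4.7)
The plan is to estimate $\partial_{x}^{\beta}K_{\lambda,0}^{\delta_{p},\gamma}(x)$ by directly inverting the Fourier transform, exploiting that $m_{\lambda,0}^{\delta_{p},\gamma}$ is supported in $\{|\xi|<1/2\}$ with a controlled singularity at the origin. Writing $(1-|\xi|^{\gamma})^{\delta_{p}}-1=|\xi|^{\gamma}h(|\xi|^{\gamma})$ with $h$ real-analytic near $0$, one has $m_{\lambda,0}^{\delta_{p},\gamma}(\xi)=|\xi|^{\gamma-\lambda}h(|\xi|^{\gamma})\phi_{0}(\xi)$; the condition $\lambda\leq\gamma$ makes this bounded at $\xi=0$. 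Since $\partial_{\xi_{k}}|\xi|^{s}$ is controlled by $|\xi|^{s-1}$, an induction-plus-Leibniz argument in the extra factor $\xi^{\beta}$ yields the key pointwise bound
\begin{equation*}
\bigl|\partial_{\xi_{k}}^{N}\bigl(\xi^{\beta}m_{\lambda,0}^{\delta_{p},\gamma}(\xi)\bigr)\bigr|\preceq |\xi|^{|\beta|+\gamma-\lambda-N},\qquad 0<|\xi|<1/2,
\end{equation*}
valid for every $N\in\nn$.

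When $|x|\leq 1$ the claim reduces to the finiteness of $\int|\xi|^{|\beta|}|m_{\lambda,0}^{\delta_{p},\gamma}(\xi)|\,d\xi$, which is the $N=0$ case of the bound above combined with $\lambda\leq\gamma$. For $|x|\geq 1$ I would use the cone decomposition into $E_{k}$, on which $|x_{k}|\simeq |x|$, and split the $\xi$-integral at the scale $|\xi|\sim 1/|x|$ by a smooth cutoff $\eta_{*}(\xi)=\eta(|x|\xi)$. The low-frequency part is bounded directly:
\begin{equation*}
\biggl|\int\eta_{*}(\xi)\,\xi^{\beta}m_{\lambda,0}^{\delta_{p},\gamma}(\xi)\,e^{2\pi i\xi\cdot x}\,d\xi\biggr|\preceq \int_{|\xi|\leq 2/|x|}|\xi|^{|\beta|+\gamma-\lambda}\,d\xi\preceq \frac{1}{|x|^{n+|\beta|+\gamma-\lambda}},
\end{equation*}
which matches the target rate. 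The high-frequency part $\int(1-\eta_{*}(\xi))\,\xi^{\beta}m_{\lambda,0}^{\delta_{p},\gamma}(\xi)\,e^{2\pi i\xi\cdot x}\,d\xi$ is handled by integrating by parts $N$ times in $\xi_{k}$, with $N$ any integer strictly larger than $n+|\beta|+\gamma-\lambda$; this gains the factor $|x_{k}|^{-N}\simeq |x|^{-N}$ on $E_{k}$ and moves $N$ derivatives onto the compactly supported integrand. Expanding by Leibniz and combining the first-paragraph bound with $|\partial_{\xi_{k}}^{j}\eta_{*}|\preceq |x|^{j}$ (supported on $\{|\xi|\sim 1/|x|\}$ when $j\geq 1$), each resulting term is bounded by $|x|^{N-n-|\beta|-\gamma+\lambda}$, which combines with the prefactor $|x|^{-N}$ to give the required decay.

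The main technical point is the careful matching of scales in this last step. The threshold $N>n+|\beta|+\gamma-\lambda$ is chosen precisely so that the $j=0$ Leibniz term (no derivative on the cutoff) has a convergent radial integral $\int_{1/|x|}^{1/2}r^{|\beta|+\gamma-\lambda-N+n-1}\,dr\preceq |x|^{N-n-|\beta|-\gamma+\lambda}$, while each $j\geq 1$ term automatically has the correct scaling because the $|x|^{j}$ factors from the cutoff derivatives exactly compensate for the reduced symbol-derivative exponent on the support $|\xi|\sim 1/|x|$. No auxiliary truncation near $\xi=0$ is needed, since $(1-\eta_{*})$ vanishes in a neighborhood of the origin and therefore the high-frequency integrand is smooth and compactly supported, which justifies the integration by parts with no boundary contribution.
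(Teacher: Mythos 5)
Your proposal is correct and follows essentially the same route as the paper: reduce to the symbol $|\xi|^{\gamma-\lambda}$ times a smooth factor via the Taylor expansion of $(1-t)^{\delta_p}-1$, use the cone decomposition into the $E_k$, cut at the scale $|\xi|\sim 1/|x|$ with $\eta(|x|\xi)$, bound the low-frequency piece by the direct integral, and integrate by parts in $\xi_k$ on the high-frequency piece using the bound $|\partial^{N}(\xi^{\beta}m_{\lambda,0}^{\delta_p,\gamma})|\preceq|\xi|^{|\beta|+\gamma-\lambda-N}$. The only difference is organizational: the paper integrates by parts $n$ times before inserting the cutoff and once more afterwards, while you insert the cutoff first and then take all $N>n+|\beta|+\gamma-\lambda$ integrations by parts at once, which makes the bookkeeping slightly cleaner but is not a genuinely different argument.
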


\begin{proof}
A direct calculation shows that
\begin{equation*}
\partial_{x}^{\beta}K_{\lambda,0}^{\delta_p,\gamma}(x)=\int_{\mathbb{R}^n}%
\frac{\phi_0 \left( {\xi}\right) }{|\xi|^{\lambda }}\left( \left( 1-{%
|\xi|^{\gamma }}\right) _{+}^{\delta_p }-1\right)(2\pi i\xi)^{\beta} e^{2\pi
i\xi\cdot x}d\xi.
\end{equation*}

By Taylor's expansion we notice that in the support of $\phi_0 $,
\begin{equation*}
(1-|\xi |^{\gamma })_{+}^{\delta_p }-1=\delta_p |\xi |^{\gamma }+\omega (\xi
),
\end{equation*}%
where $\omega $ is a $C^{\infty }$ function on $\mathbb{R}^{n}\backslash
\{0\}$ satisfying $\omega (\xi )=O(|\xi |^{2\gamma })$ and $\left( \nabla
^{k}\omega \right) (\xi )=O(|\xi |^{2\gamma -k})$ for $k\in \mathbb{N}$ as $|\xi |\rightarrow 0$.
Thus we only need to work the term
\begin{equation*}
\mathfrak{K}_0(x)=\int_{\mathbb{R}^n}\phi_0 (\xi) |\xi|^{\gamma-\lambda}
\xi^{\beta} e^{2\pi i\xi\cdot x}d\xi,
\end{equation*}
since the other terms are similarly estimated.

To estimate the function $\mathfrak{K}_{0}(x)$, we break up $E_{1}$ into the
regions $\{x\in E_{1}:|x|<1\}$ and $\{x\in E_{1}:|x|\geq 1\}$. \ Clearly, we
have
\begin{equation*}
|\mathfrak{K}_{0}(x)|\preceq 1,\ \text{if}\ |x|<1.
\end{equation*}

We now estimate the second case $|x|>1$. Recalling that
\begin{equation*}
|\mathfrak{K}_{0}(x)|\leq \sum_{k=1}^{n}\left\vert \chi _{E_{k}}(x)\int_{%
\mathbb{R}^{n}}\phi _{0}(\xi )|\xi |^{\gamma -\lambda }\xi ^{\beta }e^{2\pi
i\xi \cdot x}d\xi \right\vert ,
\end{equation*}%
by the symmetry, we only need to work
\begin{equation*}
\chi _{E_{1}}(x)\int_{\mathbb{R}^{n}}\phi _{0}(\xi )|\xi |^{\gamma -\lambda
}\xi ^{\beta }e^{2\pi i\xi \cdot x}d\xi .
\end{equation*}

%The case $\lambda =\gamma $ is obvious. In this case, an easy integration by
%parts on $\xi _{1}$ yields that
%\begin{equation*}
%\chi_{E_1}(x)\int_{\mathbb{R}^{n}}\phi _{0}(\xi )\xi ^{\beta }e^{2\pi i\xi \cdot x}d\xi
%=O\left( \frac{1}{|x|^{L+|\beta |}}\right) ,\ \text{as}\ |x|>1
%\end{equation*}%
%for any $L>0$.

Denote by $\vec{e}_{1}=(1,0,\ldots ,0)$. We integrate
for $n$ times on the $\xi _{1}$ variable to obtain
\begin{align*}
& \quad \quad \chi _{E_{1}}(x)\int_{\mathbb{R}^{n}}\phi _{0}(\xi )|\xi
|^{\gamma -\lambda }\xi ^{\beta }e^{2\pi i\xi \cdot x}d\xi  \\
& \quad =O\left( \frac{\chi _{E_{1}}(x)}{|x|^{n}}\int_{\mathbb{R}^{n}}%
\widetilde{\phi _{0}}(\xi )\left( \sum_{k=0}^{n}c_{k}|\xi |^{\gamma -\lambda
-2k}\xi ^{\beta +(2k-n)\vec{e}_{1}}\right) e^{2\pi i\xi \cdot x}d\xi \right)
,
\end{align*}%
as $|x|>1,$ where, without loss of generality, we may assume that $%
\widetilde{\phi _{0}}$ are different functions in each occurrence satisfying
the following three conditions: (i) $\widetilde{\phi _{0}}\in C^{\infty }(%
\mathbb{R}^{n}\backslash \{0\}),$\ (ii) $\mathrm{supp}\ \widetilde{\phi _{0}}%
\subseteq \{\xi \in \mathbb{R}^{n}:|\xi |\leq 1/2\}$, (iii) $\widetilde{\phi
_{0}}(\xi)=O(|\xi |^{n})$, as $|\xi |\rightarrow 0$. Now let $\eta \in C^{\infty
}(\mathbb{R}^{n})$ be a radial function satisfying that $\eta (x)=1$ for $%
|x|\leq 1$ and $\eta (x)=0$ for $|x|\geq 2$. For $0\leq k\leq n$, write
\begin{align*}
& \quad \quad \quad \chi _{E_{1}}(x)\int_{\mathbb{R}^{n}}\widetilde{\phi _{0}%
}(\xi )|\xi |^{\gamma -\lambda -2k}\xi ^{\beta +(2k-n)\vec{e}_{1}}e^{2\pi
i\xi \cdot x}d\xi  \\
& \quad \quad \quad \quad =\chi _{E_{1}}(x)\int_{\mathbb{R}^{n}}\eta (|x|\xi
)\widetilde{\phi _{0}}(\xi )|\xi |^{\gamma -\lambda -2k}\xi ^{\beta +(2k-n)%
\vec{e}_{1}}e^{2\pi i\xi \cdot x}d\xi  \\
& \quad \quad \quad \quad \quad +\chi _{E_{1}}(x)\int_{\mathbb{R}^{n}}\left(
1-\eta (|x|\xi )\right) \widetilde{\phi _{0}}(\xi )|\xi |^{\gamma -\lambda
-2k}\xi ^{\beta +(2k-n)\vec{e}_{1}}e^{2\pi i\xi \cdot x}d\xi .
\end{align*}%
Then we have
\begin{align*}
& \quad \quad \quad \left\vert \frac{\chi _{E_{1}}(x)}{|x|^{n}}\int_{\mathbb{%
R}^{n}}\widetilde{\phi _{0}}(\xi )|\xi |^{\gamma -\lambda -2k}\xi ^{\beta
+(2k-n)\vec{e}_{1}}e^{2\pi i\xi \cdot x}d\xi \right\vert  \\
& \quad \quad \quad \quad \preceq \frac{1}{|x|^{n}}\int_{|\xi |<2/{|x|}}|\xi
|^{\gamma -\lambda +|\beta |-n}d\xi  \\
& \quad \quad \quad \quad \quad +\frac{\chi _{E_{1}}(x)}{|x|^{n}}\left\vert
\int_{\mathbb{R}^{n}}\left( 1-\eta (|x|\xi )\right) \widetilde{\phi _{0}}%
(\xi )|\xi |^{\gamma -\lambda -2k}\xi ^{\beta +(2k-n)\vec{e}_{1}}e^{2\pi
i\xi \cdot x}d\xi \right\vert .
\end{align*}%
The first term
\begin{equation*}
\frac{1}{|x|^{n}}\int_{|\xi |<2/{|x|}}|\xi |^{\gamma -\lambda +|\beta
|-n}d\xi \preceq |x|^{\lambda -\gamma -\left\vert \beta \right\vert -n}.
\end{equation*}%
For the second term, we use the integration by parts one more time on the $%
\xi _{1}$ variable to obtain,
{\allowdisplaybreaks\begin{align*}
& \quad \frac{\chi _{E_{1}}(x)}{|x|^{n}}\left\vert \int_{\mathbb{R}%
^{n}}\left( 1-\eta (|x|\xi )\right) \widetilde{\phi _{0}}(\xi )|\xi
|^{\gamma -\lambda -2k}\xi ^{\beta +(2k-n)\vec{e}_{1}}e^{2\pi i\xi \cdot
x}d\xi \right\vert  \\
& \preceq \frac{1}{|x|^{n+1}}\left\vert \int_{\mathbb{R}^{n}}\left( 1-\eta
(|x|\xi )\right) \widetilde{\phi _{0}}(\xi )|\xi |^{\gamma -\lambda
-2k-2}\xi ^{\beta +(2k-n+1)\vec{e}_{1}}e^{2\pi i\xi \cdot x}d\xi \right\vert
\\
& \quad +\frac{1}{|x|^{n+1}}\left\vert \int_{\mathbb{R}^{n}}\left( 1-\eta
(|x|\xi )\right) \widetilde{\phi _{0}}(\xi )|\xi |^{\gamma -\lambda -2k}\xi
^{\beta +(2k-n-1)\vec{e}_{1}}e^{2\pi i\xi \cdot x}d\xi \right\vert  \\
& \quad +\frac{1}{|x|^{n}} \int_{1/{|x|<|\xi |<2/{|x|}}}|\xi
|^{\gamma -\lambda +|\beta |-n}d\xi   \\
& \preceq \frac{1}{|x|^{n+|\beta |+\gamma -\lambda }},
\end{align*}}where the function $\partial_1^1 \eta$ is supported in the annulus
$\{\xi \in \mathbb{R}^{n}:1<|\xi |<2\}$.
\end{proof}

\begin{lem}
\label{lem21} For $\lambda\geq 0$ and $0<p<1$, we have
\begin{equation*}
|\partial_{x}^{\beta}K_{\lambda,1}^{\delta_p,\gamma}(x)|\preceq 1, \text{\
if\ } |x|\leq 1,
\end{equation*}
and
\begin{equation*}
|\partial_{x}^{\beta}K_{\lambda,1}^{\delta_p,\gamma}(x)|\preceq |x|^{-\frac{%
n+1}{2}-\delta_{p}}, \text{\ if\ }|x|> 1.
\end{equation*}
\end{lem}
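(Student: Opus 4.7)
The plan is to split the argument by the size of $|x|$. For $|x|\leq 1$ the bound is trivial: the integrand in
\[ \partial_x^\beta K_{\lambda,1}^{\delta_p,\gamma}(x)=\int_{\mathbb{R}^n}\frac{\phi_1(\xi)(2\pi i\xi)^\beta}{|\xi|^\lambda}(1-|\xi|^\gamma)_+^{\delta_p}e^{2\pi i\xi\cdot x}\,d\xi \]
is supported in the compact annulus $\{1/4<|\xi|<2\}\cap\{|\xi|\leq 1\}$; there $|\xi|^{-\lambda}$ and $|\xi|^{|\beta|}$ are bounded, and since $\delta_p>(n-1)/2\geq 0$ for $0<p<1$, we also have $(1-|\xi|^\gamma)_+^{\delta_p}\leq 1$, so absolute integrability gives $|\partial_x^\beta K_{\lambda,1}^{\delta_p,\gamma}(x)|\preceq 1$.

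For $|x|>1$, set $a(\xi):=\phi_1(\xi)(2\pi i\xi)^\beta/|\xi|^\lambda\in C_c^\infty(\mathbb{R}^n\setminus\{0\})$, supported in $\{1/4<|\xi|<2\}$. Taylor expanding at $t=1$ gives $1-t^\gamma=\gamma(1-t)-\tfrac{\gamma(\gamma-1)}{2}(1-t)^2+\cdots$, so $g(t):=(1-t^\gamma)/(1-t)$ extends smoothly across $t=1$ with $g(1)=\gamma>0$. Hence $(1-|\xi|^\gamma)_+^{\delta_p}=(1-|\xi|)_+^{\delta_p}\,g(|\xi|)^{\delta_p}$ in a neighborhood of the unit sphere, where $g(|\xi|)^{\delta_p}$ is smooth. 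Choosing a radial cutoff $\theta\in C_c^\infty$ with $\theta\equiv 1$ close to $|\xi|=1$ and supported in $\{3/4<|\xi|<5/4\}$, I split $a(\xi)(1-|\xi|^\gamma)_+^{\delta_p}$ into $a\,\theta\,g^{\delta_p}\,(1-|\xi|)_+^{\delta_p}$ plus $a\,(1-\theta)(1-|\xi|^\gamma)_+^{\delta_p}$. The second term has a $C_c^\infty$ symbol, so its inverse Fourier transform decays faster than any polynomial and is negligible for our purposes.

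The main term is $F(x):=\int\tilde a(\xi)(1-|\xi|)_+^{\delta_p}e^{2\pi i\xi\cdot x}\,d\xi$ with $\tilde a\in C_c^\infty(\mathbb{R}^n)$; this is the classical Bochner-Riesz kernel with a smooth amplitude, and I obtain $|F(x)|\preceq|x|^{-(n+1)/2-\delta_p}$ via polar coordinates $\xi=r\omega$ combined with stationary phase on the sphere. By the non-vanishing curvature of $S^{n-1}$, the spherical integral $\Phi(r,x):=\int_{S^{n-1}}\tilde a(r\omega)e^{2\pi ir\omega\cdot x}\,d\sigma(\omega)$ admits the asymptotic expansion $(r|x|)^{-(n-1)/2}\bigl(c_+(r,x)e^{2\pi ir|x|}+c_-(r,x)e^{-2\pi ir|x|}\bigr)$ plus lower-order smooth terms. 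Substituting into $F(x)=\int_0^1(1-r)^{\delta_p}r^{n-1}\Phi(r,x)\,dr$ and performing $[\delta_p]+1$ integrations by parts in the $r$-variable against $(1-r)_+^{\delta_p}$ near the endpoint $r=1$ produces an extra factor $|x|^{-\delta_p-1}$; combined with the $|x|^{-(n-1)/2}$ from the spherical decay this yields the desired bound.

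The main obstacle is making the stationary-phase expansion precise for a non-radial amplitude (arising from the $\xi^\beta$ factor), and carrying through the radial integration by parts with the singular factor $(1-r)_+^{\delta_p}$ for non-integer $\delta_p$; technical care is also needed so that the final constant is independent of $\lambda$ and $\beta$, which is transparent once one notes that these parameters appear only through a uniformly bounded $C^\infty$ factor supported away from $\xi=0$.
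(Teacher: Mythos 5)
Your outline is in essence the same as the paper's argument, just phrased in stationary-phase language instead of Bessel-function language: the paper writes $K_{\lambda,1}^{\delta_p,\gamma}$ as a radial integral against the normalized Bessel function $V_{(n-2)/2}(2\pi|x|t)$, handles $\partial_x^\beta$ with the identity $\frac{d}{dt}(J_\nu(t)/t^\nu)=-J_{\nu+1}(t)/t^\nu$, invokes the asymptotic $J_\nu(t)\sim t^{-1/2}(e^{it}\sum c_j t^{-j}+e^{-it}\sum d_j t^{-j})$, and then integrates by parts in $t$; your stationary-phase expansion of $\widehat{\tilde a\,d\sigma}$ encodes exactly the same $|x|^{-(n-1)/2}$ decay and oscillation, and your radial integration by parts plays the same role as the paper's integration by parts in $t$. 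One genuine (and welcome) simplification on your side is the Taylor factorization $(1-|\xi|^\gamma)_+^{\delta_p}=(1-|\xi|)_+^{\delta_p}g(|\xi|)^{\delta_p}$ with $g$ smooth and positive near $|\xi|=1$, which cleanly removes the $\gamma$-dependence; the paper instead carries $(1-t^\gamma)^j$ factors through the integrations by parts.

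The one place where you are waving your hands, and which you do honestly flag, is the assertion that ``performing $[\delta_p]+1$ integrations by parts in $r$ produces an extra factor $|x|^{-\delta_p-1}$'' for non-integer $\delta_p$. Each integration by parts against $e^{\pm 2\pi ir|x|}$ gains only $|x|^{-1}$, so $[\delta_p]+1$ of them give $|x|^{-[\delta_p]-1}$, and you are left with $\int(1-r)^{\{\delta_p\}-1}e^{\pm 2\pi ir|x|}\,dr$ where $\{\delta_p\}=\delta_p-[\delta_p]\in(0,1)$; the remaining fractional decay $|x|^{-\{\delta_p\}}$ is not free but requires the endpoint truncation at $r=1-1/|x|$, estimating the near-endpoint piece by absolute integrability of $(1-r)^{\{\delta_p\}-1}$ and the far piece by one more integration by parts. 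This is exactly the paper's split into $G_{j,1}$ and $G_{j,2}$. With that step filled in, your argument is complete and correct.
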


\begin{proof}
Using the formula in \cite[p.155]{SW}, we know that
\begin{equation*}
K_{\lambda ,1}^{\delta _{p},\gamma }(x)=(2\pi )^{\frac{n}{2}}\int_{1/{4}}^{1}%
{\phi _{1}\left( {t}\right) }\left( 1-t^{\gamma }\right) ^{\delta
_{p}}t^{n-\lambda -1}V_{\frac{n-2}{2}}(2\pi |x|t)dt,  \label{k111}
\end{equation*}%
where
\begin{equation*}
V_{\frac{n-2}{2}}(2\pi |x|t)=\frac{J_{\frac{n-2}{2}}(2\pi |x|t)}{(2\pi
|x|t)^{\frac{n-2}{2}}},
\end{equation*}   $J_{v}(t)$ denotes the usual Bessel function satisfying that
$v$ is a complex number  $\text{Re}\ v>-1/2$ and $t\geq 0$.
Using the formula
\begin{equation*}
\frac{d}{dt}\left( \frac{J_{\gamma }(t)}{t^{\gamma }}\right) =-\frac{%
J_{\gamma +1}(t)}{t^{\gamma }},
\end{equation*}%
for any multi-index $\beta $ and $\lambda \geq 0$, an easy computation shows
that
\begin{align}
& \partial _{x}^{\beta }V_{\frac{n-2}{2}}(2\pi |x|t)=(-1)^{|\beta |}(2\pi
t)^{2|\beta |}x^{\beta }V_{\frac{n-2}{2}+|\beta |}(2\pi |x|t)  \label{pk1} \\
& \quad \quad \quad \quad \quad \quad \quad \quad \quad +\sum_{0<|k|\leq
\lbrack \frac{|\beta |}{2}]}c_{k}t^{2(|\beta |-|k|)}x^{\beta -2k}V_{\frac{n-2%
}{2}+|\beta |-|k|}(2\pi |x|t). \notag
\end{align}  Invoking the above estimate \eqref{pk1}, we only
need to estimate the term
\begin{equation}
\mathfrak{K}_{1}(x)=x^{\beta }\int_{1/{4}}^{1}{\phi _{1}\left( {t}\right) }%
\left( 1-t^{\gamma }\right) ^{\delta _{p}}t^{n+2|\beta |-\lambda -1}V_{\frac{%
n-2}{2}+|\beta |}(2\pi |x|t)dt.  \label{k11}
\end{equation}%
As a result, the estimate $\partial _{x}^{\beta }K_{\lambda ,1}^{\delta
_{p},\gamma }(x)$ can be obtained because the other terms can be handled in
a similar method.

To estimate \eqref{k11}, we need to use the behavior of the Bessel function $%
J_{v}(t) $. The following estimate and asymptotic formula can be found in \cite%
[p.430]{G1} and \cite[p.338]{S2}
\begin{equation}
|J_{v}(t)|\preceq t^{\text{Re}\ v},\ 0<t\leq 1;  \label{bs1}
\end{equation}
when $v$ is fixed, the asymptotic formula for $J_{v}(t)$, as $t\rightarrow
\infty $, is%
\begin{eqnarray*}\label{bs2}
\notag J_{v}(t) &\sim &t^{-1/2}e^{it}\sum_{j=0}^{\infty
}c_{j}t^{-j}+t^{-1/2}e^{-it}\sum_{j=0}^{\infty }d_{j}t^{-j} \\
&=&t^{-1/2}e^{it}\sum_{j=0}^{N_{0}}c_{j}t^{-j}+t^{-1/2}e^{-it}%
\sum_{j=0}^{N_{0}}d_{j}t^{-j}+O\left( \frac{1}{t^{N_{0}+3/2}}\right)
\end{eqnarray*}
where $c_{j}$, $d_{j}$ are coefficients, and the positive integer $N_{0}$
will be determined later.

We now divide the value of $\ |x|$ into two cases: $|x|\leq 1$ and $|x|>1$.
Using the estimate of \eqref{bs1}, we have that for $|x|\leq 1$,
\begin{equation*}
|V_{\frac{n-2}{2}}(2\pi |x|t)|\preceq 1,
\end{equation*}%
and
\begin{align*}
\quad \left\vert \mathfrak{K}_{1}(x)\right\vert & =\left\vert x^{\beta
}\int_{1/{4}}^{1}{\phi _{1}\left( {t}\right) }\left( 1-t^{\gamma }\right)
^{\delta _{p}}t^{n+2|\beta |-\lambda -1}V_{\frac{n-2}{2}+|\beta |}(2\pi
|x|t)dt\right\vert  \\
& \preceq {|x|}^{|\beta |}\int_{1/4}^{1}(1-t^{\gamma })^{\delta
_{p}}dt\preceq 1.
\end{align*}

For $|x|>1$, applying the asymptotic formula for $J_{v}(t)$, it is enough to
estimate the function $G_{j}\ (0\leq j\leq N_{0})$ % with the main term
\begin{equation*}
G_{j}(x)={|x|}^{-(\frac{n-1}{2}+|\beta|+j)}x^{\beta}\int_{1/{4}}^{1}{\phi_1
\left( {t}\right) }\left( 1-t^{\gamma }\right) ^{\delta_{p}}t^{\frac{n-1}{2}%
+|\beta|-\lambda-j}e^{\pm 2\pi i|x|t}dt.
\end{equation*}

If  $\delta _{p}$ is an integer, then we take $N_{0}=\delta _{p}$. The estimate
 $|G_{N_{0}}(x)|\preceq {|x|}^{-\frac{n+1}{2}-\delta _{p}}$ is easy. For $%
0\leq j<N_{0}$, after integrating by parts $\delta _{p}-j$ times, we see
that
\begin{equation*}
G_{j}(x)\approx \frac{x^{\beta }}{|x|^{\frac{n-1}{2}+\delta _{p}+|\beta |}}%
\int_{1/{4}}^{1}\rho _{j,1}(t)\left( 1-t^{\gamma }\right) ^{j}e^{\pm 2\pi
i|x|t}dt,
\end{equation*}%
where $\rho _{j,1}(t)$ is a $C^{\infty }(\mathbb{R}^{n})$ function
satisfying $\mathrm{supp}\ \rho _{j,1}(t)\subseteq \lbrack 1/4,1]$. It is a
trivial case for  $j=0$. For $0<j<N_{0}$, using integration by parts again,
we obtain
\begin{equation*}
|G_{j}(x)|\preceq {|x|}^{-\frac{n+1}{2}-\delta _{p}},\ 0<j<N_{0}.
\end{equation*}

If $\delta_{p}$ is not an integer, then we take $N_0=[\delta_{p}]+2$. It is a simple matter to
establish $|G_{N_0}(x)|\preceq {|x|}^{-\frac{n+1}{2}-\delta_{p}}$. For $0\leq
j< N_0$, using integration by parts $[\delta_{p}]+1-j$ times, we get
\begin{equation*}
G_j(x)\approx \frac{x^{\beta}}{|x|^{\frac{n-1}{2}+[\delta_{p}]+|\beta|+1}}%
\int_{1/{4}}^1
\rho_{j,2}(t)\left(1-t^\gamma\right)^{j-([\delta_p]+1-\delta_p)} e^{\pm 2\pi
i |x|t} dt,
\end{equation*}
where $0<[\delta_p]+1-\delta_p< 1$ and $\rho_{j,2}(t)$ is a $C^{\infty}(%
\mathbb{R}^n)$ function satisfying $\mathrm{supp}\ \rho _{j,2}(t)\subseteq
[1/4,1]$. Write
{\allowdisplaybreaks\begin{align*}
&\quad\left|\frac{x^{\beta}}{|x|^{\frac{n-1}{2}+[\delta_{p}]+|\beta|+1}}%
\int_{1/{4}}^1
\rho_{j,2}(t)\left(1-t^\gamma\right)^{j-([\delta_p]+1-\delta_p)} e^{\pm 2\pi
i |x|t} dt\right| \\
&\leq \frac{1}{|x|^{\frac{n+1}{2}+[\delta_{p}]}}\left|\int_{1/4}^{1-1/{|x|}}
\rho_{j,2}(t)\left(1-t^\gamma\right)^{j-([\delta_p]+1-\delta_p)} e^{\pm 2\pi
i |x|t} dt\right| \\
&\quad+\frac{1}{|x|^{\frac{n+1}{2}+[\delta_{p}]}}\left|\int_{1-1/{|x|}}^1
\rho_{j,2}(t)\left(1-t^\gamma\right)^{j-([\delta_p]+1-\delta_p)} e^{\pm 2\pi
i |x|t} dt\right| \\
&:=G_{j,1}(x)+G_{j,2}(x).
\end{align*}}
For $G_{j,1}(x)$, using one more integration by parts, one has
\begin{equation}  \label{g11}
G_{j,1}(x)\preceq {|x|}^{-\frac{n+1}{2}-\delta_{p}}.
\end{equation}
For $G_{j,2}(x)$, we have for $|x|>1$,
\begin{align}  \label{g12}
&\quad G_{j,2}(x)\preceq\frac{1}{|x|^{\frac{n+1}{2}+[\delta_{p}]}}\int_{1-1/{%
|x|}}^1 \left(1-t^\gamma\right)^{j-([\delta_p]+1-\delta_p)} dt \\
&=-\frac{1}{\gamma(j+\delta_p-[\delta_p])}\cdot\frac{1}{|x|^{\frac{n+1}{2}%
+[\delta_{p}]}} \int_{1-1/{|x|}}^1 t^{1-\gamma}
d\left(1-t^\gamma\right)^{j+\delta_p-[\delta_p]}  \notag \\
&\preceq \frac{1}{|x|^{\frac{n+1}{2}+[\delta_{p}]}} \cdot\left(\frac{1}{|x|}%
-O\left(\frac{1}{|x|^2}\right)\right)^{j+\delta_p-[\delta_p]}  \notag \\
&\preceq {|x|}^{-\frac{n+1}{2}-\delta_{p}}.  \notag
\end{align}
The estimates of \eqref{g11} and \eqref{g12} give
\begin{equation*}
|G_j(x)|\preceq {|x|}^{-\frac{n+1}{2}-\delta_{p}}, \ 0\leq j< N_0.
\end{equation*}

Combing all the estimates, we finish the proof of Lemma \ref{lem21}.
\end{proof}

\begin{lem}
\label{lem3} For $\lambda> 0$ and $0<p<1$, we have that
\begin{equation*}
|\partial_{x}^{\beta}K_{\lambda,\infty}^{\delta_p,\gamma}(x)|\preceq \frac{1%
}{|x|^{n+|\beta|-\lambda}},\ \text{if}\ |x|<1
\end{equation*}
and if $|x|\geq 1$
\begin{equation*}
|\partial_{x}^{\beta}K_{\lambda,\infty}^{\delta_p,\gamma}(x)|\preceq \frac{1%
}{|x|^{n+L}},\ \text{for any positive integer}\ L>|\beta|-\lambda.
\end{equation*}
\end{lem}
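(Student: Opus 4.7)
The plan is to adapt the cone decomposition and integration-by-parts strategy used in Lemma \ref{lem2}. First I would write, as a tempered distribution (realized via a compactly supported approximant $\varphi(\xi/M)\to 1$),
\[
\partial_{x}^{\beta} K_{\lambda,\infty}^{\delta_p,\gamma}(x)
= (2\pi i)^{|\beta|}\int_{\mathbb{R}^n} h(\xi)\, e^{2\pi i\xi\cdot x}\,d\xi, \qquad
h(\xi) := \Psi_\infty(\xi)\,\xi^\beta\,|\xi|^{-\lambda}.
\]
Since $\Psi_\infty\equiv 1$ for $|\xi|\geq 1/2$ and is supported in $\{|\xi|>1/4\}$, the symbol $h$ lies in $C^\infty(\mathbb{R}^n)$ and satisfies $|\partial_{\xi_1}^{j} h(\xi)| \preceq |\xi|^{|\beta|-\lambda-j}$ for $|\xi|\geq 1/2$, together with a bounded contribution supported in the annulus $\{1/4<|\xi|<1/2\}$. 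By the cones $E_k$ and symmetry as in Lemma \ref{lem2}, it would suffice to bound $\chi_{E_1}(x)\int h(\xi) e^{2\pi i\xi\cdot x}\,d\xi$, on whose support $|x_1|\simeq|x|$.

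For $|x|\geq 1$, I would integrate by parts $N=n+L$ times in the $\xi_1$-variable. Since $L>|\beta|-\lambda$ gives $|\beta|-\lambda-N<-n$, $\partial_{\xi_1}^{N} h$ is absolutely integrable, and the Leibniz cross-terms involving derivatives of $\varphi(\xi/M)$ vanish as $M\to\infty$ by dominated convergence. This yields
\[
|\partial_{x}^{\beta} K_{\lambda,\infty}^{\delta_p,\gamma}(x)|
\preceq \frac{1}{|x_1|^{N}}\int_{\mathbb{R}^n}|\partial_{\xi_1}^{N} h(\xi)|\,d\xi
\preceq |x|^{-(n+L)},
\]
as required.

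For $|x|<1$, let $\eta\in C^\infty(\mathbb{R}^n)$ be radial with $\eta\equiv 1$ on $|\xi|\leq 1$ and $\eta\equiv 0$ on $|\xi|\geq 2$, and split $h=h\cdot\eta(|x|\cdot)+h\cdot(1-\eta(|x|\cdot))$. The low-frequency piece is controlled directly:
\[
\left|\int h(\xi)\eta(|x|\xi) e^{2\pi i\xi\cdot x}\,d\xi\right|
\preceq \int_{1/4<|\xi|<2/|x|}|\xi|^{|\beta|-\lambda}\,d\xi
\preceq |x|^{-(n+|\beta|-\lambda)}.
\]
For the high-frequency piece, I would integrate by parts $N$ times in $\xi_1$ on the cone $E_1$, with $N>n+|\beta|-\lambda$. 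Each Leibniz term has $j$ derivatives on $h$ and $N-j$ on $1-\eta(|x|\xi)$: the latter factor contributes $|x|^{N-j}$ and, when $N-j\geq 1$, restricts the support to $1/|x|\preceq|\xi|\preceq 2/|x|$, while the $j=N$ term integrates $|\xi|^{|\beta|-\lambda-N}$ over $|\xi|>1/|x|$. In every case the bound is $|x|^{-N}\cdot|x|^{\,N-(n+|\beta|-\lambda)}=|x|^{-(n+|\beta|-\lambda)}$.

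The main obstacle will be the careful Leibniz bookkeeping for the $|x|<1$ case, confirming that each cross-term respects the common bound $|x|^{-(n+|\beta|-\lambda)}$, and rigorously justifying the distributional Fourier integral via the compactly supported approximant $\varphi(\xi/M)$.
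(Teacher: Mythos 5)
Your proposal is correct and follows essentially the same route as the paper: decompose into the cones $E_k$ so that $|x_1|\simeq|x|$, split at frequency scale $1/|x|$ with the cutoff $\eta(|x|\xi)$ when $|x|<1$, and integrate by parts repeatedly in the $\xi_1$-variable ($n+L$ times for $|x|\geq 1$, and more than $n+|\beta|-\lambda$ times for the high-frequency piece when $|x|<1$). Your choice to pass $\partial_x^{\beta}$ onto the symbol as $(2\pi i\xi)^{\beta}$ \emph{before} integrating by parts (justified by the $\varphi(\xi/M)$ regularization) streamlines the heavy Leibniz bookkeeping that occupies most of the paper's argument, but the decomposition and the underlying estimates are the same.
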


\begin{proof}
Similar to the proof of Lemma \ref{lem2}, it is enough to estimate the kernel $\chi
_{E_{1}}(x){m_{\lambda ,\infty }^{\delta _{p},\gamma }}^{\vee }(x).$ Using
the similar argument as that of Lemma \ref{lem2}, we break up $E_{1}$ into
the regions $\{x\in E_{1}:|x|<1\}$ and $\{x\in E_{1}:|x|\geq 1\}$.

For $|x|<1$, recall that
\begin{equation*}
K_{\lambda ,\infty }^{\delta _{p},\gamma }(x)=\int_{%
\mathbb{R}^{n}}\frac{\Psi _{\infty }\left( {\xi }\right) }{\left\vert \xi
\right\vert ^{\lambda }}e^{2\pi ix\cdot \xi }d\xi .
\end{equation*}
 Let $\eta $ be the function as in Lemma \ref{lem2}. Write $K_{\lambda
,\infty }^{\delta _{p},\gamma }(x)$ as
\begin{equation*}
\int_{\mathbb{R}^{n}}\eta (|x|\xi )\frac{\Psi _{\infty }\left( {\xi }\right)
}{\left\vert \xi \right\vert ^{\lambda }}e^{2\pi ix\cdot \xi }d\xi +\int_{%
\mathbb{R}^{n}}\left( 1-\eta (|x|\xi )\right) \frac{\Psi _{\infty }\left( {%
\xi }\right) }{\left\vert \xi \right\vert ^{\lambda }}e^{2\pi ix\cdot \xi
}d\xi :=I(x)+II(x).
\end{equation*}%
An easy calculation by using Leibniz's rule leads to
\begin{align*}
\partial _{x}^{\beta }I(x)& =\int_{\mathbb{R}^{n}}\frac{\Psi _{\infty }\left( {\xi }\right) }{\left\vert \xi
\right\vert ^{\lambda }}\left(\sum_{\mu+\nu=\beta}\frac{\beta!}{\mu!\text{}\nu!}\partial^{\mu}_{x}\left(\eta (|x|\xi )\right)\partial^{\nu}_{x}\left(e^{2\pi ix\cdot \xi }\right)\right)d\xi   \notag  \label{i1} \\
& =\int_{\mathbb{R}^{n}}\frac{\Psi _{\infty }\left( {\xi }\right) }{\left\vert \xi
\right\vert ^{\lambda }}\left(\sum_{\mu+\nu=\beta}\frac{\beta!}{\mu!\text{}\nu!}(2\pi i \xi)^{\nu}\partial^{\mu}_{x}\left(\eta (|x|\xi )\right)\right)e^{2\pi ix\cdot \xi }d\xi,
\end{align*}
and
\begin{equation*}\label{eta1}
\partial^{\mu}_{x}\left(\eta (|x|\xi )\right) =\sum_{|\alpha|+|\tau|=|\mu|}P_{\tau}(x)
\left(\partial^{\alpha}\eta\right)(|x|\xi)\xi^{\alpha},
\end{equation*}
%\begin{align}
%\partial _{x}^{\beta }I(x)& =\int_{\mathbb{R}^{n}}\eta (|x|\xi )\frac{(2\pi
%i\xi )^{\beta }\text{ }\Psi _{\infty }\left( {\xi }\right) }{\left\vert \xi
%\right\vert ^{\lambda }}e^{2\pi ix\cdot \xi }d\xi   \notag  \label{i1} \\
%& \quad +\sum_{0<\nu \leq \beta }P_{\mu }(x)\int_{\mathbb{R}^{n}}\left(
%\partial ^{\nu }_x\eta \right) (|x|\xi )\xi ^{\nu }\frac{\Psi _{\infty }(\xi )%
%}{|\xi |^{\lambda }}e^{2\pi ix\cdot \xi }d\xi ,
%\end{align}%
where $\mu $, $\nu $, $\alpha$ and $\tau$ are $n$-tuple indices,
 and $P_{\tau }(x)\in C^{\infty}(\mathbb{R}^n\backslash\{0\})$  satisfies
\begin{equation}
|P_{\tau }(x)|\preceq |x|^{-|\tau |}.  \label{P}
\end{equation}%

We divide $\mu$ into two cases: $\mu=0$ and $\mu\neq 0$.

Firstly, if $\mu=0$, by the choice of $\eta $ and the support condition for the function $\Psi_{\infty} $,
then we have
\begin{equation*}
\left|\partial^{\beta}I(x)\right|\preceq \int_{1/{|x|}<|\xi |<2/{|x|}}|\xi |^{|\beta |-\lambda }d\xi \preceq
|x|^{-n-|\beta |+\lambda }.
\end{equation*}%

Secondly, when $\mu\neq0$, note that the function $\partial ^{\nu }\eta$ is supported in the set $%
\{x\in \mathbb{R}^{n}:1<|x|<2\}$. Using the condition \eqref{P}, we can see that the term $|\partial^{\beta}I(x)|$
is not greater than a constant multiple of
\begin{align*}
& \quad\sum_{\mu +\nu =\beta }\sum_{|\alpha|+|\tau|=|\mu|}
\frac{1}{|x|^{|\tau |}}\int_{1/{|x|}<|\xi |<2/{|x|}}{%
|\xi |}^{|\nu |+|\alpha|-\lambda }d\xi  \\
& \preceq |x|^{-|\beta |-n+\lambda }.
\end{align*}%

We now estimate the second term  $II(x)$. Without of loss of generality, assume  $x_1\neq 0$ and $0<\lambda<1$,
integration by parts on the variable $\xi _{1}$ for $n-1$ times,
\begin{equation*}
II(x) \approx\frac{1}{{(-2 \pi i|x_1|)}^{n-1}}\int_{
\mathbb{R}^{n}}\partial^{n-1}_{1}\left(\left( 1-\eta (|x|\xi )\right) \frac{\Psi _{\infty }\left( {%
\xi }\right) }{\left\vert \xi \right\vert ^{\lambda }}\right)
e^{2\pi ix\cdot \xi
}d\xi.
\end{equation*}
Using the generalized Leibniz rule, we have
{\allowdisplaybreaks\begin{align*}
&\quad\quad \partial^{n-1}_{1}\left(\left( 1-\eta (|x|\xi )\right) \frac{\Psi _{\infty }\left( {%
\xi }\right) }{\left\vert \xi \right\vert ^{\lambda }}\right)\\
&=\sum_{k_1+k_2+k_3=n-1}C_{k_1,k_2,k_3,n-1}\partial_1^{k_1}(1-\eta(|x|\xi))
  \partial_1^{k_2}(\Psi_{\infty}(\xi))\partial_1^{k_3}(|\xi|^{-\lambda})\\
&=\sum_{k_1+k_2+k_3=n-1,k_1\neq 0}C_{k_1,k_2,k_3,n-1}\partial_1^{k_1}(1-\eta(|x|\xi))
  \partial_1^{k_2}(\Psi_{\infty}(\xi))\partial_1^{k_3}(|\xi|^{-\lambda})\\
  &\quad+\sum_{k_2+k_3=n-1,k_2\neq 0}C_{k_2,k_3,n-1}\left( 1-\eta (|x|\xi )\right)
  \partial_1^{k_2}(\Psi_{\infty}(\xi))\partial_1^{k_3}(|\xi|^{-\lambda})\\
 &\quad+\left( 1-\eta (|x|\xi )\right) \Psi_{\infty}(\xi)\partial_1^{n-1}(|\xi|^{-\lambda})\\
 & =
\sum_{k_1+k_2+k_3=n-1,k_1\neq 0}C_{k_1,k_2,k_3,n-1} \left(\sum_{k_{11}+k_{12}=k_1,k_{11}\neq 0 }C_{k_{11},k_{12},k_1}
  |x|^{k_{11}}(\partial_1^{k_{11}}\eta)(|x|\xi)P_{k_{12}}(\xi)\right)\\
&\quad\quad \times \left(\sum_{k_{21}+k_{22}=k_2}C_{k_{21},k_{22},k_2}(\partial_1^{k_{21}}
\Psi_{\infty})(\xi)P_{k_{22}}(\xi)\right)\left(\sum_{j=0}^{\left[{k_3}/{2}\right]}
C_{j}\frac{\xi_1^{k_3-2j}}{|\xi|^{\lambda+2(k_3-j)}}\right)\\
&\quad+\sum_{k_2+k_3=n-1,k_2\neq 0}C_{k_2,k_3,n-1}\left( 1-\eta (|x|\xi )\right)\\
&\quad\quad\times
  \left(\sum_{k_{21}+k_{22}=k_2,k_{21}\neq 0}C_{k_{21},k_{22},k_2}(\partial_1^{k_{21}}
\Psi_{\infty})(\xi)P_{k_{22}}(\xi)\right)\left(\sum_{j=0}^{\left[{k_3}/{2}\right]}C_{j}
\frac{\xi_1^{k_3-2j}}{|\xi|^{\lambda+2(k_3-j)}}\right)\\
&\quad+\left( 1-\eta (|x|\xi )\right) {\Psi _{\infty }\left( {
\xi }\right) } \sum_{j=0}^{\left[{(n-1)}/{2}\right]}C_{j}\frac{\xi_1^{n-1-2j}}{|\xi|^{\lambda+2(n-1-j)}}\\
&:=II_1(x,\xi)+II_2(x,\xi)+II_3(x,\xi),
 \end{align*}}where $k_1,k_2,k_3,k_{11},k_{12},k_{21},k_{22} $ are nonnegative integers,
and $P_{k_{j2}}(\xi)$ are $ C^{\infty}(\mathbb{R}^n\backslash\{0\})$  functions with
\begin{equation*}
|P_{k_{j2}}(\xi)|\preceq |\xi|^{-k_{j2}},  \label{Pgg1}
\end{equation*} for $j=1, 2$,
and $C_{k_1,k_2,k_3,n-1},C_{k_{11},k_{12},k_1},C_{k_{21},k_{22},k_2}, C_{j} $ are constants which are dependent of the corresponding parameters of subindices.

%\begin{equation*}
%\left|\chi_{E_1}K_{\lambda ,\infty }^{\delta _{p},\gamma }(x)\right|
%\preceq  \frac{\chi_{E_1}}{|x|^{n+[L]}}\left|\int_{\mathbb{R}^{n}}\frac{%
%\Psi _{\infty }(\xi )}{{|\xi |}^{\lambda }}P_{\tau }(\xi )e^{2\pi ix\cdot
%\xi }d\xi\right|+O\left(\frac{1}{|x|^{n+[L]}}\right).
%\end{equation*}

To obtain the estimate of the derivative of $K_{\lambda,\infty }^{\delta
_{p},\gamma }(x)$, $\partial _{x}^{\beta }K_{\lambda,\infty }^{\delta
_{p},\gamma }(x)$, it suffices to consider the following three terms:
\begin{equation}\label{d111}
\partial_{x}^{\beta}\left(x_1^{1-n}\left(1-\eta(|x|\xi)\right)e^{2\pi i x\cdot\xi}\right),
\end{equation}
\begin{equation}\label{d13}
\partial_{x}^{\beta}\left(x_1^{1-n}|x|^{k}
\left(\partial_1^{k}\eta\right)(|x|\xi)e^{2\pi i x\cdot\xi}\right),\ k\in \mathbb{N}\backslash\{0\},
\end{equation}
\begin{equation}\label{d16}\partial^{\beta}_x\left(\frac{1}{(-2\pi i |x_1|)^{n-1}} \int_{\mathbb{R}^n}\left( 1-\eta (|x|\xi )\right) {\Psi _{\infty }\left( {
\xi }\right) } \frac{\xi_1^{n-1-2j}}{|\xi|^{\lambda+2(n-1-j)}}e^{2\pi i x\cdot\xi} d\xi\right),\  0\leq j\leq \left[\frac{(n-1)}{2}\right].
\end{equation}

We first estimate \eqref{d111}. By an induction similar to \eqref{eta1} we can show that
\begin{equation}\label{eta2}
\partial^{\mu}_{x}\left(1-\eta (|x|\xi )\right) =\sum_{|\alpha|+|\tau|=|\mu|}P_{\tau}(x)
\left(\partial^{\alpha}\eta\right)(|x|\xi)\xi^{\alpha},
\end{equation}
where $\mu $, $\nu $, $\alpha$ and $\tau$ are $n$-tuple indices, $P_{\tau }(x)$ belongs to $C^{\infty}(\mathbb{R}^n\backslash\{0\})$  satisfying \eqref{P}. Applying the generalized Leibniz rule together with \eqref{eta2}, we have
{\allowdisplaybreaks\begin{eqnarray*}\label{d7111}
&&\quad\partial_{x}^{\beta}\left(x_1^{1-n}\left(1-\eta(|x|\xi)\right)e^{2\pi i x\cdot\xi}\right)\\
&=&\sum_{\zeta+\mu +\kappa=\beta}\widetilde{C_{\zeta,\mu,\kappa,\beta}}\partial^{\zeta}_x(x_1^{1-n})
\partial^{\mu}_x\left(1-\eta(|x|\xi)\right)\partial^{\kappa}_x (e^{2\pi i x\cdot\xi})\\
&=&\sum_{\zeta+\mu +\kappa=\beta}{C_{\zeta,\mu,\kappa,\beta}}x_1^{1-n-|\zeta|}
\partial^{\mu}_x\left(1-\eta(|x|\xi)\right){(2 \pi i \xi)}^{\kappa}e^{2\pi i x\cdot\xi}\\
&=&\sum_{\zeta+\mu +\kappa=\beta}{C_{\zeta,\mu,\kappa,\beta}}x_1^{1-n-|\zeta|}
\left(\sum_{|\alpha|+|\tau|=|\mu|}P_{\tau}(x)
\left(\partial^{\alpha}\eta\right)(|x|\xi)\xi^{\alpha}\right){(2 \pi i \xi)}^{\kappa}e^{2\pi i x\cdot\xi},
\end{eqnarray*}}
and then
\begin{eqnarray*}\label{d8111}
&&\quad\left|\chi_{E_1}(x)\partial_{x}^{\beta}\left(x_1^{1-n}\left(1-\eta(|x|\xi)\right)e^{2\pi i x\cdot\xi}\right)\right|\\
&\preceq &\sum_{\zeta+\mu +\kappa=\beta}{C_{\zeta,\mu,\kappa,\beta}}\sum_{|\alpha|+|\tau|=|\mu|} |x|^{1-n-|\zeta|-|\tau|}
\left|\left(\partial^{\alpha}\eta\right)(|x|\xi)\right||\xi|^{|\alpha|+|\kappa|},
\end{eqnarray*} where $\widetilde{C_{\zeta,\mu,\kappa,\beta}}$ and $C_{\zeta,\mu,\kappa,\beta}$ are positive constants depending on $\zeta,\ \mu,\ \kappa$ and $\beta$.

In order to use the different properties of $\eta$ and $\partial^{\alpha}\eta$
$(|\alpha|\neq 0)$, we need to
consider two cases: $\mu\neq 0$ and $\mu=0$.  If $\mu\neq 0$,
together with these estimates above, then we see that
{\allowdisplaybreaks\begin{align*}
&\quad\left|\chi_{E_1}(x)\partial^{\beta}_x\left(\frac{1}{(-2\pi i |x_1|)^{n-1}}\int_{\mathbb{R}^n}II_2(x,\xi)e^{2\pi i x\cdot\xi}\right)d\xi\right|\\
&\preceq  \sum_{\zeta+\mu +\kappa=\beta}{C_{\zeta,\mu,\kappa,\beta}}\sum_{|\alpha|+|\tau|=|\mu|} |x|^{1-n-|\zeta|-|\tau|}
\int_{1/{|x|}<|\xi|<2/{|x|}} \left|\left(\partial^{\alpha}\eta\right)(|x|\xi)\right||\xi|^{|\alpha|+|\kappa|-\lambda-n} d\xi
\\
&\preceq   \sum_{\zeta+\mu +\kappa=\beta}{C_{\zeta,\mu,\kappa,\beta}}\sum_{|\alpha|+|\tau|=|\mu|} \frac{1}{|x|^{n+|\zeta|+|\tau|+|\alpha|+|\kappa|-\lambda}}\\
&\preceq\frac{1}{|x|^{n+|\beta|-\lambda}}.
\end{align*}}If $\mu=0$, then we note that $k_{21}\neq 0$ implies that
$\supp\partial^{k_{21}}\Psi_{\infty}\subset\{\xi\in \mathbb{R}^n: 1/4<|\xi|<1/2\}$. So the conditions $\supp \eta\subset \{\xi\in\mathbb{R}^n: |\xi|>1/{|x|}\}$ and $\supp \partial^{k_{21}}\Psi_{\infty}\subset\{\xi\in \mathbb{R}^n: 1/4<|\xi|<1/2\}$ imply that
$1/2<|x|<1$. Thus we have
{\allowdisplaybreaks\begin{align*}
&\quad\left|\chi_{E_1}(x)\partial^{\beta}_x\left(\frac{1}{(-2\pi i |x_1|)^{n-1}}\int_{\mathbb{R}^n}II_2(x,\xi)e^{2\pi i x\cdot\xi}d\xi\right)\right|\\
&\preceq  \sum_{\zeta+\mu +\kappa=\beta}{C_{\zeta,\mu,\kappa,\beta}}\sum_{|\alpha|+|\tau|=|\mu|} |x|^{1-n-|\zeta|-|\tau|}
\int_{1/{4}<|\xi|<1/2} \left|1-\eta(|x|\xi)\right|\left|(\partial_1^{k_{21}}
\Psi_{\infty})(\xi)\right| d\xi\\
&\preceq  1.
\end{align*}}

The estimate of \eqref{d13} is similar to that of \eqref{d111}, and we yield that
\begin{equation*}
\left|\chi_{E_1}(x)\partial^{\beta}_x\left(\frac{1}{(-2\pi i |x_1|)^{n-1}}\int_{\mathbb{R}^n}II_1(x,\xi)e^{2\pi i x\cdot\xi}d\xi\right)\right|\preceq\frac{1}{|x|^{n+|\beta|-\lambda}}.
\end{equation*}

We now deal with term \eqref{d16}. In fact, we only need to prove the following two cases:
 for some fixed $j$, $0\leq j\leq \left[{(n-1)}/{2}\right]$,\\
$(i)$  if $|\beta|<\lambda+(n-1)$,
\begin{equation*}\label{case1}
\frac{1}{(-2\pi i |x_1|)^{n-1}}\int_{\mathbb{R}^n}\left( 1-\eta (|x|\xi )\right) {\Psi _{\infty }\left( {
\xi }\right) } \frac{\xi_1^{n-1-2j}}{|\xi|^{\lambda+2(n-1-j)}}\partial^{\beta}_x \left(e^{2\pi i x\cdot\xi}\right) d\xi;
\end{equation*}
$(ii)$  if $|\beta|\geq \lambda+(n-1)$,
\begin{equation*}\label{case2}
\frac{1}{(-2\pi i |x_1|)^{n-1}}\partial^{\beta-\alpha}_x \left(\int_{\mathbb{R}^n}\left( 1-\eta (|x|\xi )\right) {\Psi _{\infty }\left( {
\xi }\right) } \frac{\xi_1^{n-1-2j}}{|\xi|^{\lambda+2(n-1-j)}}\partial^{\alpha}_x\left(e^{2\pi i x\cdot\xi}\right) d\xi\right), \ \end{equation*} where $|\alpha|=n-1$.

If $|\beta|<\lambda+(n-1)$, then we have
{\allowdisplaybreaks\begin{align}\label{k3}
\notag &\quad\frac{1}{(2\pi i |x_1|)^{n-1}}\int_{\mathbb{R}^n}\left( 1-\eta (|x|\xi )\right) {\Psi _{\infty }\left( {
\xi }\right) } \frac{\xi_1^{n-1-2j}}{|\xi|^{\lambda+2(n-1-j)}}\partial^{\beta}_x \left(e^{2\pi i x\cdot\xi}\right) d\xi\\
&=\frac{1}{(2\pi i |x_1|)^{n-1}}\int_{\mathbb{R}^n}\left( 1-\eta (|x|\xi )\right) {\Psi _{\infty }\left( {
\xi }\right) } \frac{\xi_1^{n-1-2j}}{|\xi|^{\lambda+2(n-1-j)}}{(2\pi i \xi)}^{\beta}e^{2\pi i x\cdot\xi} d\xi.
\end{align}}Integrating  by parts on the variable $\xi$  $n$ times, term \eqref{k3} is equal to
\begin{equation*}\label{k31}\frac{1}{(-2\pi i |x_1|)^{2n-1}}\int_{\mathbb{R}^n}\partial^{n-1}_{\xi_1}\left(\left( 1-\eta (|x|\xi )\right) {\Psi _{\infty }\left( {
\xi }\right) } \frac{\xi_1^{n-1-2j}}{|\xi|^{\lambda+2(n-1-j)}}\xi^{\beta}\right) e^{2\pi i x\cdot\xi}d\xi.
\end{equation*} Repeating the previous argument, we can see that it is enough to estimate
\begin{equation*}\label{k31}\frac{1}{(2\pi i |x_1|)^{2n-1}}\int_{\mathbb{R}^n}\left( 1-\eta (|x|\xi )\right) {\Psi _{\infty }\left( {\xi }\right) } P_{\tau'}(\xi) e^{2\pi i x\cdot\xi}d\xi,
\end{equation*} where $\tau'$ is an $n$-tuple indices, and $P_{\tau '}(x)\in C^{\infty}(\mathbb{R}^n\backslash\{0\})$  satisfies $$|P_{\tau'}(\xi)|\preceq \frac{1}{{|\xi|}^{\lambda+2n-|\beta|-1}}. $$
Then we conclude
\allowdisplaybreaks{\begin{align*}\label{k41}
&\quad\left|\chi_{E_1}(x)\frac{1}{(2\pi i |x_1|)^{2n-1}}\int_{\mathbb{R}^n}\left( 1-\eta (|x|\xi )\right) {\Psi _{\infty }\left( {\xi }\right) } P_{\tau'}(\xi) e^{2\pi i x\cdot\xi}d\xi\right|\\
&\preceq \frac{1}{|x|^{2n-1}}\int_{|\xi|>1/{|x|}}\frac{1}{{|\xi|}^{\lambda+2n-|\beta|-1}} d\xi
\preceq \frac{1}{|x|^{n+|\beta|-\lambda}}.
\end{align*}}

 If $|\beta|\geq \lambda+(n-1)$, then we have
\begin{eqnarray*}\label{bad1}
&&\quad\frac{1}{(2\pi i |x_1|)^{n-1}}\partial^{\beta-\alpha}_x \left(\int_{\mathbb{R}^n}\left( 1-\eta (|x|\xi )\right) {\Psi _{\infty }\left( {\xi }\right) } \frac{\xi_1^{n-1-2j}}{|\xi|^{\lambda+2(n-1-j)}}\partial^{\alpha}_x\left(e^{2\pi i x\cdot\xi}\right) d\xi\right)\\
&=& \frac{1}{(2\pi i |x_1|)^{n-1}}\partial^{\beta-\alpha}_x \left(\int_{\mathbb{R}^n}\left( 1-\eta (|x|\xi )\right) {\Psi _{\infty }\left( {\xi }\right) } \frac{\xi_1^{n-1-2j}}{|\xi|^{\lambda+2(n-1-j)}}(2\pi i \xi)^{\alpha}e^{2\pi i x\cdot\xi} d\xi\right).
\end{eqnarray*} Note that $|\alpha|=n-1$. For the integral in the bracket above, by performing integration by parts $n$ times on the variable $\xi_1$, we yield that
\begin{align*}
&\quad\int_{\mathbb{R}^n}\left( 1-\eta (|x|\xi )\right) {\Psi _{\infty }\left( {\xi }\right) } \frac{\xi_1^{n-1-2j}}{|\xi|^{\lambda+2(n-1-j)}}(2\pi i \xi)^{\alpha}e^{2\pi i x\cdot\xi} d\xi\\
&=\frac{1}{(2\pi i |x_1|)^{n}}\int_{\mathbb{R}^n}\partial^{n}_{\xi_1}\left(\left( 1-\eta (|x|\xi )\right) {\Psi _{\infty }\left( {\xi }\right) } \frac{\xi_1^{n-1-2j}}{|\xi|^{\lambda+2(n-1-j)}}(2\pi i \xi)^{\alpha}\right) e^{2\pi i x\cdot\xi} d\xi.
\end{align*}
Repeating the previous estimates, we can obtain
$$\left|\frac{\chi_{E_1}(x)}{(2\pi i |x_1|)^{n-1}}\partial^{\beta-\alpha}_x \left(\int_{\mathbb{R}^n}\left( 1-\eta (|x|\xi )\right) {\Psi _{\infty }\left( {\xi }\right) } \frac{\xi_1^{n-1-2j}}{|\xi|^{\lambda+2(n-1-j)}}\partial^{\alpha}_x\left(e^{2\pi i x\cdot\xi}\right) d\xi\right)\right|\preceq \frac{1}{|x|^{n+|\beta|-\lambda}},$$
and then
\begin{equation*}
\left|\chi_{E_1}(x)\partial^{\beta}_x\left(\frac{1}{(-2\pi i |x_1|)^{n-1}}\int_{\mathbb{R}^n}II_3(x,\xi)e^{2\pi i x\cdot\xi}d\xi\right)\right|\preceq\frac{1}{|x|^{n+|\beta|-\lambda}}.
\end{equation*}

Therefore, we show that for $|x|<1$,
$$|\partial_{x}^{\beta}K_{\lambda,\infty}^{\delta_p,\gamma}(x)|\preceq \frac{1%
}{|x|^{n+|\beta|-\lambda}}.$$  In addition, if a nonnegative integer $k\leq n$ and
 $k\leq \lambda<k+1$, then we only need to take integration by parts on the variable $\xi$ for $n-{k+1}$ times. If $k>n$, it is a triviality.
For the case $|x|\geq 1$, recall that %it is easy to see that
\begin{equation*}
K_{\lambda ,\infty }^{\delta _{p},\gamma }(x)=\int_{%
\mathbb{R}^{n}}\frac{\Psi _{\infty }\left( {\xi }\right) }{\left\vert \xi
\right\vert ^{\lambda }}e^{2\pi ix\cdot \xi }d\xi .
\end{equation*}%

Integration by parts on the variable $\xi_{1}$ for $n+L$ times yields that
\begin{equation*}
K_{\lambda ,\infty }^{\delta
_{p},\gamma }(x) =
  \frac{1}{{(-2 \pi i|x_1|)}^{n+L}}\int_{\mathbb{R}^n}\partial^{n+L}_{\xi_1}\left(\frac{\Psi_{\infty}(\xi)}{|\xi|^{\lambda}}\right) e^{2\pi i x\cdot \xi} d\xi.
 \end{equation*}
  Repeated calculations as before enables us to obtain that
{\allowdisplaybreaks\begin{align*}
  & \partial^{n+L}_{\xi_1}\left(\frac{\Psi_{\infty}(\xi)}{|\xi|^{\lambda}}\right)=\sum_{k_1+k_2=n+L}C_{k_{1},k_{2},n+L}\\
  &\times \left(\sum_{k_{11}+k_{12}=k_1, k_{11}\neq 0} C_{k_{11},k_{12},k_1}(\partial_1^{k_{11}}
\Psi_{\infty})(\xi)P_{k_{12}}(\xi)\right)\left(\sum_{j=0}^{\left[{k_2}/{2}\right]}
\frac{C_{j} \xi_1^{k_2-2j}}{|\xi|^{\lambda+2(k_2-j)}}\right)\\
&\quad +\Psi_{\infty}(\xi)\left(\sum_{j=0}^{\left[{(n+L)}/{2}\right]}
\frac{C_{j} \xi_1^{n+L-2j}}{|\xi|^{\lambda+2(n+L-j)}}\right),
\end{align*}}%
where $C_{k_{1},k_{2},n+L},\  C_{k_{11},k_{12},k_1},  C_{j} $ are constants, and $P_{k_{12}}(\xi)\in C^{\infty}(\mathbb{R}^n\backslash\{0\})$
satisfies
\begin{equation}
|P_{k_{12}}(\xi)|\preceq |\xi|^{-k_{12}}. \label{Pgg}
\end{equation}
%and $Q_{k}(\xi )$ is the $k$-th  rational
%polynomial function.
%In view of the property of $\partial_1^k \Psi_{\infty}\in \mathscr{S}(\mathbb{R}^{n})$
% which is supported in the set $\{\xi \in \mathbb{R}^{n}:1/4<|\xi |<1/2\}$.
%\begin{equation*}
%\left|\chi_{E_1}K_{\lambda ,\infty }^{\delta _{p},\gamma }(x)\right|
%\preceq  \frac{\chi_{E_1}}{|x|^{n+[L]}}\left|\int_{\mathbb{R}^{n}}\frac{%
%\Psi _{\infty }(\xi )}{{|\xi |}^{\lambda }}P_{\tau }(\xi )e^{2\pi ix\cdot
%\xi }d\xi\right|+O\left(\frac{1}{|x|^{n+[L]}}\right).
%\end{equation*}

To express clearly the estimate of the derivative of $\partial _{x}^{\beta }K_{\lambda,\infty }^{\delta
_{p},\gamma }(x)$, we need to introduce some more notation. For any $n$-tuple $\beta=(\beta_1,\beta_2,\ldots,\beta_n)$,
set $\beta'=(\beta_2,\ldots,\beta_n)$, then $\beta=(\beta_1,\beta')$.
Similarly, for any $n$-tuple $x=(x_1, x_2, \ldots,x_n)$ write $x=(x_1,x')$, where $x'=(x_2,\ldots,x_n)$.
Now for $|x|\geq 1$, we use Leibniz's rule to obtain \begin{align*}
&\quad \partial_{x_1}^{\beta_1}\partial_{x'}^{\beta'}\left(\left(-2\pi i|x_1|\right)^{-(n+L)} e^{2\pi i x\cdot \xi}\right)\\
& =\partial_{x_1}^{\beta_1}
\left((2\pi ix_1)^{-(n+L)}e^{2 \pi i x_1\xi_1}\right)\partial^{\beta'}_{x'}\left(e^{2\pi ix'\cdot
\xi' }\right) \\
& =C_{\beta'}{\xi'}^{\beta'}e^{2\pi ix\cdot
\xi } \sum_{0\leq l\leq \beta_1}C_{l}x_1^{-(n+L+l)}\xi_1^{\beta_1-l}.
\end{align*}%
As a result, invoking the estimate of \eqref{Pgg} and the properties of $\Psi_{\infty}$, we yield that
{\allowdisplaybreaks \begin{align*}
&\quad\left|\chi _{E_{1}}(x)\partial _{x}^{\beta }K_{\lambda,\infty }^{\delta
_{p},\gamma }(x)\right|\\
&\preceq \frac{1}{|x|^{n+L}} \sum_{k_1+k_2=n+L} \sum_{k_{11}+k_{12}=k_1, k_{11}\neq 0} \sum_{j=0}^{\left[{(n+L)}/{2}\right]}
\int_{1/4<|\xi|<1/2}\frac{1}{|\xi|^{\lambda+{k_2}-|\beta|+j+k_{12}}}d\xi\\
&\quad+\frac{1}{|x|^{n+L}} \sum_{j=0}^{\left[{(n+L)}/{2}\right]}
\sum_{0\leq l\leq \beta_1} \frac{1}{|x|^l}\left|\int_{\mathbb{R}^{n}}\frac{%
\Psi _{\infty }(\xi )}{{|\xi |}^{\lambda+2(n+L-j) }}{\xi'}^{\beta'} \xi_1^{n+L-2j+\beta_1-l}e^{2\pi ix\cdot
\xi } d\xi\right|\\
&\preceq \frac{1}{|x|^{n+L}} \sum_{0\leq l\leq \beta_1} \frac{1}{|x|^l}\int_{|\xi|>1/4}\frac{%
1}{{|\xi |}^{n+\lambda+L+l-|\beta| }} d\xi\\
&\quad+\frac{1}{|x|^{n+L}} \sum_{0\leq l\leq \beta_1} \frac{1}{|x|^l}\int_{|\xi|>1/4}\frac{%
1}{{|\xi |}^{n+\lambda+L+l-|\beta| }} d\xi\\
&\preceq \frac{1}{|x|^{n+L}},
\end{align*}}%
because of $L>|\beta |-\lambda $ and $|x|>1$. Similarly, we can estimate the other terms
$ \chi _{E_{j}}(x)\partial _{x}^{\beta }K_{\lambda,\infty }^{\delta
_{p},\gamma }(x)$ for $2\leq j\leq n$. Combining all the estimates, we complete
the proof of this lemma.
\end{proof}

\section{Proof of Theorems \protect\ref{t3} and \protect\ref{t2}}

\label{s3} We first give the proof of Theorem \ref{t2}. We shall present
some important lemmas.

\begin{lem}[\protect\cite{S2}]
\label{lem4} Let $0<p\leq 1$. Suppose that a function $\zeta$ vanishes at $%
\infty$ and satisfies
\begin{equation*}
\left|\partial_x^{\beta}\zeta(x)\right|\preceq \left(1+|x|\right)^{-A}
\end{equation*}
for $|\beta|=[n(1/p-1)]+1$ and $A>n/p$. Then there is a constant $C>0$, for
any $f\in H^p(\mathbb{R}^n)$,
\begin{equation*}
\left\|\sup_{R>0}|f\ast\zeta_{R}|\right\|_{L^p(\mathbb{R}^n)}\leq
C\|f\|_{H^p(\mathbb{R}^n)},
\end{equation*}%
where $\zeta_{R}(x)=R^{-n}\zeta(x/R)$ for $R>0$ and $x\in \mathbb{R}^n$.
\end{lem}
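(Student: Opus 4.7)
The plan is to reduce to a single $(p,2)$-atom via the atomic decomposition of $H^p(\mathbb{R}^n)$, and then split the $L^p$-integral of the maximal function into local and non-local pieces, handled respectively by an $L^2$ maximal bound and by pairing the atom's vanishing moments with a Taylor expansion of $\zeta_R$. First I would write $f=\sum_k c_k a_k$ with $\sum_k|c_k|^p\approx\|f\|_{H^p}^p$, each $a_k$ a $(p,2)$-atom, and use the $p$-subadditivity of the $L^p$ quasi-norm for $0<p\leq 1$:
\[
\Bigl\|\sup_{R>0}|f\ast\zeta_R|\Bigr\|_{L^p}^p\leq\sum_k|c_k|^p\Bigl\|\sup_{R>0}|a_k\ast\zeta_R|\Bigr\|_{L^p}^p.
\]
This reduces the task to a uniform estimate $\|\sup_{R>0}|a\ast\zeta_R|\|_{L^p}^p\preceq 1$ for every $(p,2)$-atom $a$ supported on a cube $Q$ of sidelength $\ell$ centered at $x_0$. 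I would fix such an atom and let $Q^{\ast}=2\sqrt{n}\,Q$.

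For the local piece on $Q^{\ast}$, I would apply H\"older's inequality to get $\int_{Q^{\ast}}(\sup_R|a\ast\zeta_R|)^p\,dx\leq|Q^{\ast}|^{1-p/2}\|\sup_R|a\ast\zeta_R|\|_{L^2}^p$. Since $A>n/p>n$, the function $\zeta$ admits an integrable radial decreasing majorant, so $\sup_R|g\ast\zeta_R|$ is pointwise dominated by a constant times the Hardy--Littlewood maximal function and hence $L^2$-bounded; combined with the atomic size bound $\|a\|_{L^2}\preceq|Q|^{1/2-1/p}$, the local contribution is uniformly bounded.

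For the non-local piece on $(Q^{\ast})^c$, I would set $N=[n(1/p-1)]$ and subtract from $y\mapsto\zeta_R(x-y)$ its order-$N$ Taylor polynomial at $y=x_0$; the cancellation condition $\int a(y)y^{\alpha}\,dy=0$ for $|\alpha|\leq N$ then rewrites $a\ast\zeta_R(x)$ as an integral against the Taylor remainder. Using the derivative bound $|\partial^{\beta}\zeta_R(z)|\preceq R^{-n-N-1}(1+|z|/R)^{-A}$ for $|\beta|=N+1$ together with Cauchy--Schwarz,
\[
|a\ast\zeta_R(x)|\preceq|Q|^{1-1/p}\,\ell^{N+1}\,R^{-n-N-1}\Bigl(1+\frac{|x-x_0|}{R}\Bigr)^{-A}.
\]
Optimizing in $R$ (the critical regime being $R\sim|x-x_0|$) yields $\sup_R|a\ast\zeta_R(x)|\preceq|Q|^{1-1/p}\ell^{N+1}|x-x_0|^{-n-N-1}$ on $(Q^{\ast})^c$. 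Since $N+1>n(1/p-1)$, raising to the $p$-th power and integrating in polar coordinates produces an $\ell^{\,n-p(n+N+1)}$ factor which combines with $|Q|^{p-1}\ell^{p(N+1)}=\ell^{\,n(p-1)+p(N+1)}$ to give exactly $\ell^{0}=1$, closing the uniform bound.

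The hard part will be the delicate supremum in $R$ in the non-local estimate: one must carefully balance the large-$R$ decay $R^{-n-N-1}$ against the small-$R$ decay $(1+|x-x_0|/R)^{-A}$ and verify that the integrability threshold $p(n+N+1)>n$ is precisely what the smoothness hypothesis $|\beta|=[n(1/p-1)]+1$ forces. Ensuring $A>n/p$ also provides the room needed for the radial integration to converge after the $R$-supremum has been absorbed into the pointwise estimate.
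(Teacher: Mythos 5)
The paper does not prove this lemma; it is quoted from Stein's book \cite{S2}, so there is no in-paper argument to compare against directly. That said, your plan — atomic decomposition, a local/non-local split of the $L^p$-integral, an $L^2$ maximal bound on $Q^{\ast}$, and Taylor expansion paired with the atom's vanishing moments on $(Q^{\ast})^c$ — is the standard route, and it closely mirrors what the paper actually does in its proof of Theorem~\ref{t2} for the pieces $j=1,\infty$.

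There is, however, a genuine gap in the non-local estimate. You optimize the Taylor-remainder bound
\[
|a\ast\zeta_R(x)|\preceq|Q|^{1-1/p}\,\ell^{N+1}R^{-n-N-1}\Bigl(1+\tfrac{|x-x_0|}{R}\Bigr)^{-A}
\]
over all $R>0$ and assert the supremum is $\preceq|Q|^{1-1/p}\ell^{N+1}|x-x_0|^{-n-N-1}$, with the critical scale $R\sim|x-x_0|$. This is correct only when $A\geq n+N+1$: for $R<|x-x_0|$ the right-hand side is comparable to $R^{A-n-N-1}|x-x_0|^{-A}$, which blows up as $R\to 0$ whenever $A<n+N+1$. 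The hypothesis gives only $A>n/p$, and since $N=[n(1/p-1)]$ one has $n/p\in[\,n+N,\,n+N+1)$, so $A>n/p$ does not force $A\geq n+N+1$. The repair is exactly the Case~1/Case~2 split the paper uses in its own proof of Theorem~\ref{t2}: apply the Taylor-plus-cancellation bound only in the regime $R\gtrsim\ell$ (kernel broad relative to $Q$), while for $R\lesssim\ell$ (kernel thin) bound $|a\ast\zeta_R(x)|$ directly by $\|a\|_{L^1}\sup_{y\in Q}|\zeta_R(x-y)|\preceq|Q|^{1-1/p}R^{A-n}|x-x_0|^{-A}$; its supremum over $R\lesssim\ell$ is $\preceq|Q|^{1-1/p}\ell^{A-n}|x-x_0|^{-A}$, and the $L^p$ integral over $(Q^{\ast})^c$ is $O(1)$ precisely because $A>n/p$. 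Note also that this direct size estimate — and likewise the radially-decreasing-majorant claim you invoke for the local $L^2$ step — uses a pointwise bound $|\zeta(x)|\preceq(1+|x|)^{-A}$ on $\zeta$ itself; this does not follow from a decay bound on only the order-$(N+1)$ derivatives together with $\zeta\to 0$ at infinity. The hypothesis should therefore be read as holding for all $|\beta|\leq[n(1/p-1)]+1$, which is what Lemma~\ref{lem2} in fact supplies when this lemma is applied.
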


\begin{lem}[\protect\cite{FZ}]
\label{lem1} Let $1<q<\infty $ and $\delta >(n-1)|1/q-1/2|$. For any $%
\lambda \geq 0$, we have
\begin{equation*}
\left\Vert \sup_{R>0}\left|T_{R, \lambda ,1}^{\delta ,\gamma
}g\right|\right\Vert _{L^{q}(\mathbb{R}^{n})}\preceq \Vert g\Vert _{L^{q}(%
\mathbb{R}^{n})}.
\end{equation*}
\end{lem}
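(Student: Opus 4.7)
The plan is to reduce the operator $T_{R,\lambda,1}^{\delta,\gamma}$ to the classical Bochner--Riesz means $S_{R}^{\delta,2}$ composed with a smooth Fourier multiplier, and then invoke the known $L^{q}$-boundedness of the classical maximal Bochner--Riesz operator together with the Hardy--Littlewood maximal inequality.

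First I would note that $m_{\lambda,1}^{\delta,\gamma}$ is supported in the spherical shell $\{1/4 \leq |\xi| \leq 1\}$, and use the factorization
\begin{equation*}
m_{\lambda,1}^{\delta,\gamma}(\xi) = \Psi(\xi)\,(1-|\xi|^{2})_{+}^{\delta},\qquad \Psi(\xi) := \frac{\phi_{1}(\xi)}{|\xi|^{\lambda}}\biggl(\frac{1-|\xi|^{\gamma}}{1-|\xi|^{2}}\biggr)^{\delta}.
\end{equation*}
The ratio $(1-t^{\gamma})/(1-t^{2})$ extends smoothly and stays positive on $[1/4,1]$ (L'H\^{o}pital gives the value $\gamma/2$ at $t=1$), so $\Psi \in C^{\infty}_{c}(\mathbb{R}^{n})$ with support in $\{1/4<|\xi|<2\}$, and its inverse Fourier transform $\check{\Psi}$ is a Schwartz function.

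Letting $\tilde{M}_{R}$ be the convolution operator with multiplier $\Psi(\xi/R)$, we then have $T_{R,\lambda,1}^{\delta,\gamma}g = \tilde{M}_{R}(S_{R}^{\delta,2}g)$. Because the kernel of $\tilde{M}_{R}$ is $R^{n}\check{\Psi}(R\cdot)$ and $|\check{\Psi}|$ is dominated pointwise by a radial decreasing integrable function, the standard majorant argument gives $|\tilde{M}_{R}h(x)|\leq C(Mh)(x)$ uniformly in $R>0$, where $M$ denotes the Hardy--Littlewood maximal function. Applying this with $h = S_{R}^{\delta,2}g$ and using the monotonicity of $M$ to pull the supremum inside yields the pointwise control
\begin{equation*}
\sup_{R>0}|T_{R,\lambda,1}^{\delta,\gamma}g(x)| \leq C\, M(S_{*}^{\delta,2}g)(x).
\end{equation*}
The lemma then follows from the $L^{q}$-boundedness of $M$ (for $q>1$) and of the classical Bochner--Riesz maximal operator $S_{*}^{\delta,2}$ on $L^{q}(\mathbb{R}^{n})$ under the hypothesis $\delta>(n-1)|1/q-1/2|$.

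The main obstacle is this last input: the $L^{q}$-boundedness of $S_{*}^{\delta,2}$ throughout the subcritical range $\delta>(n-1)|1/q-1/2|$ is a nontrivial classical result that the plan imports as a black box (for $\delta>(n-1)/2$ one gets it for free from a Schwartz-kernel bound, but the sharp subcritical range requires a square-function or Stein-type interpolation argument). Everything else, namely the smooth factorization of the multiplier and the approximate-identity majorization, is routine.
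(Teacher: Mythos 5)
Your factorization $m_{\lambda,1}^{\delta,\gamma}(\xi) = \Psi(\xi)(1-|\xi|^2)_+^{\delta}$ is exact, and $\Psi$ is indeed in $C_c^\infty$: on $\supp\phi_1\subset\{1/4<|\xi|<2\}$ the factor $|\xi|^{-\lambda}$ is smooth, and the ratio $(1-t^\gamma)/(1-t^2)$ extends smoothly and positively across $t=1$ (both numerator and denominator have a simple zero there, with limiting value $\gamma/2$), so the $\delta$-th power of a positive smooth function is smooth. The majorization $|\tilde M_R h|\leq C\,Mh$ from $\check\Psi$ being Schwartz, and then $\sup_R|T_{R,\lambda,1}^{\delta,\gamma}g|\leq C\,M(S_*^{\delta,2}g)$, are both routine and correct.

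Since the paper simply quotes this lemma from \cite{FZ} without reproducing the proof, I cannot match your argument line-by-line against theirs, but your reduction to the classical maximal Bochner--Riesz operator is the natural route and is sound. The one input you import as a black box --- $L^q$-boundedness of $S_*^{\delta,2}$ for $1<q<\infty$ and $\delta>(n-1)|1/q-1/2|$ --- is a genuine classical theorem: the $L^2$ endpoint ($\delta>0$) goes back to Carbery and Carbery--Rubio de Francia--Vega, the range $\delta>(n-1)/2$ gives $S_*^{\delta,2}f\lesssim Mf$ directly from the kernel decay, and Stein's analytic interpolation applied to the linearized maximal operator $f\mapsto S_{R(x)}^{\delta(z),2}f(x)$ fills in the intermediate line $\delta=(n-1)|1/q-1/2|$ on both sides of $q=2$. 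You are right to flag this as the essential nontrivial ingredient; everything else in your argument is elementary. One tiny imprecision: you say $m_{\lambda,1}^{\delta,\gamma}$ is supported in $\{1/4\leq|\xi|\leq1\}$, but because the factor $(1-|\xi|^\gamma)_+^\delta$ kills the part of $\supp\phi_1$ with $|\xi|\geq 1$, the support is the open annulus $\{1/4<|\xi|<1\}$; this does not affect anything.
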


\begin{lem}[\protect\cite{FZ}]
\label{lem111} Let $1<q<\infty $ and $\delta >-1$. For any $\lambda \geq 0$,
we have
\begin{equation*}
\left\Vert \sup_{R>0}\left|T_{R, \lambda ,\infty}^{\delta ,\gamma
}g\right|\right\Vert _{L^{q}(\mathbb{R}^{n})}\preceq \Vert g\Vert _{L^{q}(%
\mathbb{R}^{n})}.
\end{equation*}
\end{lem}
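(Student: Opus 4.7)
The plan is to dominate $\sup_{R>0}|T_{R,\lambda,\infty}^{\delta,\gamma}g|(x)$ pointwise by a constant multiple of the Hardy--Littlewood maximal function $Mg(x)$, and then apply the maximal theorem on $L^q(\rz)$ for $1<q<\infty$. The central observation is that the multiplier $m_{\lambda,\infty}^{\delta,\gamma}(\xi)=\Psi_\infty(\xi)/|\xi|^\lambda$ depends on neither $\delta$ nor $\gamma$, so the kernel $K:=K_{\lambda,\infty}^{\delta,\gamma}$ is fixed, and the pointwise estimates of Lemma~\ref{lem3} are available in the present setting (independent of any choice of $\delta_p$).

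For $\lambda>0$, I would invoke Lemma~\ref{lem3} at $|\beta|=0$ to obtain $|K(x)|\preceq |x|^{\lambda-n}$ for $|x|<1$ and $|K(x)|\preceq |x|^{-n-1}$ for $|x|\geq 1$ (picking $L=1>-\lambda$ in the statement). These estimates furnish a radial, decreasing, $L^1$ majorant of $|K|$: when $\lambda<n$ take $\psi(x)=C\min(|x|^{\lambda-n},|x|^{-n-1})$, and when $\lambda\geq n$ take $\psi(x)=C\min(1,|x|^{-n-1})$. Both lie in $L^1(\rz)$ because $\lambda>0$ ensures any singularity at the origin is integrable. The standard fact that convolution against the $R$-dilate of a kernel controlled by a radial decreasing $L^1$ majorant is pointwise dominated by the Hardy--Littlewood maximal operator then yields
\begin{equation*}
\sup_{R>0}\bigl|(K_{1/R}*g)(x)\bigr|\leq \|\psi\|_{L^1}\,Mg(x).
\end{equation*}

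The main obstacle is the borderline case $\lambda=0$, which is excluded from Lemma~\ref{lem3}, and where the majorant approach above breaks down because $m_{0,\infty}^{\delta,\gamma}=\Psi_\infty$ no longer decays at infinity. I would bypass this by using the partition of unity $\phi_0+\phi_1+\phi_\infty=1$ to write $\Psi_\infty=1-\phi_0$, yielding the distributional identity $K_{0,\infty}^{\delta,\gamma}=\delta_0-\phi_0^{\vee}$. Since $\phi_0\in C_c^\infty(\rz)$, its inverse Fourier transform is Schwartz and so admits a radial decreasing $L^1$ majorant. Hence
\begin{equation*}
\sup_{R>0}\bigl|T_{R,0,\infty}^{\delta,\gamma}g\bigr|(x)\leq |g(x)|+\sup_{R>0}\bigl|(\phi_0^{\vee})_{1/R}*g\bigr|(x)\preceq Mg(x).
\end{equation*}
Combining both cases with the Hardy--Littlewood maximal inequality $\|Mg\|_{L^q}\preceq \|g\|_{L^q}$ for $1<q<\infty$ completes the proof.
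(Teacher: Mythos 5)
The paper imports this lemma from \cite{FZ} without reproducing a proof, so there is no in-text argument to line up against your write-up; what can be said is that your proof is correct and follows the strategy that the paper itself uses for parallel estimates. Your key structural observation — that $m_{\lambda,\infty}^{\delta,\gamma}=\Psi_\infty(\xi)/|\xi|^\lambda$ is completely independent of $\delta$ and $\gamma$, so the kernel bounds proved under the heading ``$\delta_p$'' carry over verbatim — is exactly what makes the ``$\delta>-1$'' hypothesis in the lemma vacuous, and it is also why the paper can record the same kernel decay under the weaker hypothesis in Lemma \ref{lem33}; that would have been the cleaner reference for the $|\beta|=0$ bounds you need, but invoking Lemma \ref{lem3} at $|\beta|=0$ with $L=1$ is equivalent. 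The majorant $\min(|x|^{\lambda-n},|x|^{-n-1})$ is radial, decreasing and integrable precisely because $\lambda>0$, and the standard domination by $Mg$ via a radial decreasing $L^1$ majorant then gives the $L^q$ bound. Where your argument genuinely adds something beyond what the cited kernel lemmas cover is the boundary case $\lambda=0$: both Lemma \ref{lem3} and Lemma \ref{lem33} require $\lambda>0$, and the $\Psi_\infty=1-\phi_0$ identity with the distributional splitting $K_{0,\infty}^{\delta,\gamma}=\delta_0-\phi_0^{\vee}$ is the right way to reduce to a Schwartz kernel plus the identity, which is again dominated by $Mg$. For comparison, when the paper itself treats the $j=\infty$ contribution (proof of Theorem \ref{t20}), it uses Lemma \ref{lem33} together with a dyadic-annulus computation to arrive at $Mg$; that is morally the same argument as your appeal to the radial-majorant theorem, just written out by hand. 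In short: correct, complete, and essentially the approach the paper and \cite{FZ} take.
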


\medskip

\noindent \textit{Proof of Theorem \ref{t2}.} As mentioned above,  we
know that the distribution
\begin{equation*}
R^{\lambda }\{S_{R}^{\delta _{p},\gamma }(f)-f\}
\end{equation*}%
has the Fourier transform $\ \mu (\cdot /R)$ $\widehat{g},$ where $%
g=I_{-\lambda }f$ and the multiplier $\mu $  is a radial function given
by
\begin{equation*}
\mu (\xi )=\frac{(1-\left\vert \xi \right\vert ^{\gamma })_{+}^{\delta_p }-1}{%
\left\vert \xi \right\vert ^{\lambda }},\ \xi \neq 0\text{ \ and \ }\mu
(0)=\lim_{t\rightarrow 0^{+}}\frac{(1-t^{\gamma })_{+}^{\delta_p }-1}{%
t^{\lambda }}.
\end{equation*}

Since  $\mu (\xi )=m_{\lambda ,0}^{\delta _{p},\gamma }(\xi )+m_{\lambda
,1}^{\delta _{p},\gamma }(\xi )+m_{\lambda ,\infty }^{\delta _{p},\gamma
}(\xi ),$ to prove
\begin{equation*}
\left\vert \left\{ x\in \mathbb{R}^{n}:({M}_{\lambda }^{\delta _{p},\gamma
}f)(x)>s\right\} \right\vert \preceq \left( \frac{\Vert f\Vert _{I_{\lambda
}(H^{p})(\mathbb{R}^{n})}}{s}\right) ^{p},
\end{equation*}%
it suffices to show that the maximal operators $\sup_{R>0}|T_{R,\lambda
,j}^{\delta _{p},\gamma }g|$ satisfy for any $g\in H^{p}(\mathbb{R}^{n})$,
\begin{equation*}
\left\vert \left\{ x\in \mathbb{R}^{n}:\sup_{R>0}\left\vert T_{R,\lambda
,j}^{\delta _{p},\gamma }g(x)\right\vert >s\right\} \right\vert \preceq
\left( \frac{\Vert g\Vert _{H^{p}(\mathbb{R}^{n})}}{s}\right) ^{p},\
j=0,1,\infty .  \label{Tg1}
\end{equation*}

We first consider the case $j=0$. For any multi-index $\beta $  satisfying $%
|\beta |=[n(1/p-1)]+1$, then
\begin{equation*}
n+|\beta |+\gamma -\lambda >n/p+\gamma -\lambda \geq n/p.
\end{equation*}%
It follows from Lemmas \ref{lem2} and \ref{lem4} that
\begin{equation*}
\left\Vert \sup_{R>0}\left\vert T_{R,\lambda ,0}^{\delta _{p},\gamma }\left(
g\right) \right\vert \right\Vert _{L^{p}(\mathbb{R}^{n})}\preceq \Vert
g\Vert _{H^{p}(\mathbb{R}^{n})}.
\end{equation*}%
Furthermore
\begin{align*}
\left\vert \left\{ x\in \mathbb{R}^{n}:\sup_{R>0}\left\vert T_{R,\lambda
,0}^{\delta _{p},\gamma }g(x)\right\vert >s\right\} \right\vert & \preceq
\int_{\mathbb{R}^{n}}\left( \frac{\sup_{R>0}\left\vert T_{R,\lambda
,0}^{\delta _{p},\gamma }g(x)\right\vert }{s}\right) ^{p}dx \\
& \preceq \left( {\Vert g\Vert _{H^{p}(\mathbb{R}^{n})}}{s}^{-1}\right) ^{p}.
\end{align*}

Similar to \cite{STW}, using a summation formula of Stein and N. Weiss  \cite{SW2},
in order to prove the above estimate \eqref{Tg1}, it suffices to show that
for any $(p,2)$-atom $b$
\begin{equation*}
\left\vert \left\{ x\in \mathbb{R}^{n}:\sup_{R>0}\left\vert T_{R,\lambda
,j}^{\delta _{p},\gamma }b(x)\right\vert >s\right\} \right\vert \preceq
s^{-p},\ j=1,\infty .  \label{Tg}
\end{equation*}%
Without loss of generality, we can assume the atom $b$ supported in a cube $Q(0,l)$ centered at the origin with
diameter $l>0$ whose sides parallel to the axes (see also Grafakos \cite[p.94]{G2}). Let $Q^{\ast }=2\sqrt{n}Q$ denote the concentric cube
with $2\sqrt{n}$-times the diameter of $Q$.

Consider the case $j=1$. We write
\begin{align*}
&\quad\left|\left\{x\in \mathbb{R}^n: \sup_{R>0} \left|T_{R,\lambda ,1}
^{\delta_p ,\gamma }b(x)\right|>s \right\}\right| \\
&=\left|\left\{x\in Q^{*}: \sup_{R>0} \left|T_{R,\lambda ,1} ^{\delta_p
,\gamma }b(x)\right|>s \right\}\right| + \left|\left\{x\in \mathbb{R}%
^n\backslash Q^{*}: \sup_{R>0} \left|T_{R,\lambda ,1} ^{\delta_p ,\gamma
}b(x)\right|>s \right\}\right|.
\end{align*}

Note that $\sup_{R>0} \left|T_{R,\lambda ,1} ^{\delta_p ,\gamma }b(x)\right|$
is bounded on $L^2(\mathbb{R}^n)$ since $\delta_{p}$ satisfies the
assumption of Lemma \ref{lem1}. Make use of H\"{o}lder's inequality to
obtain the first term
\begin{align*}
\left|\left\{x\in Q^{*}: \sup_{R>0} \left|T_{R,\lambda ,1} ^{\delta_p
,\gamma }b(x)\right|>s \right\}\right| &\leq s^{-p}\int_{Q^{*}} \sup_{R>0}
\left|T_{R,\lambda ,1} ^{\delta_p ,\gamma }b(x)\right|^p dx \\
&\preceq s^{-p} |Q|^{1-p/2} \left\|\sup_{R>0} \left|T_{R,\lambda ,1}
^{\delta_p ,\gamma }b\right| \right\|_{L^2(\mathbb{R}^n)}^p \\
&\preceq s^{-p} |Q|^{1-p/2} \|b\|_{L^2(\mathbb{R}^n)}^p \\
&\preceq s^{-p},
\end{align*}
where in the last inequality we have used the size condition of $b$.

Next, we turn to estimate the second term
\begin{equation*}
\left|\left\{x\in \mathbb{R}^n\backslash {Q^{*}}: \sup_{R>0}
\left|T_{R,\lambda ,1} ^{\delta_p ,\gamma }b(x)\right|>s \right\}\right|.
\end{equation*}
Divide $R>0$ into two cases: $Rl>2$ and $0<Rl\leq 2$.

\medskip

Case 1: $Rl>2$. \ For $x\in \mathbb{R}^{n}\backslash {Q^{\ast }}$ and $|y-x|<%
\sqrt{n}l/2$, we have $|x-y|<|x|/2$ and $R|y|>R|x|/2>Rl>2$. Apply Lemma \ref%
{lem21} with $\beta =0$, then
\begin{equation*}
\left\vert (K_{\lambda ,1}^{\delta _{p},\gamma })_{1/R}(y)\right\vert
\preceq R^{n}(R|x|)^{-n/p}.
\end{equation*}%
Invoking the above estimate and H\"{o}lder's inequality, we have
\begin{equation*}
\left\vert T_{R,\lambda ,1}^{\delta _{p},\gamma }b(x)\right\vert \preceq
\Vert b\Vert _{L^{2}(\mathbb{R}^{n})}\left( \int_{Q}\left\vert (K_{\lambda
,1}^{\delta _{p},\gamma })_{1/R}(y)\right\vert ^{2}dy\right) ^{1/2}\preceq
|x|^{-n/p}.
\end{equation*}

Case 2: $0<Rl\leq 2$. Let the multi-index $\beta $ satisfy $|\beta
|=[n(1/p-1)]+1$. Lemma \ref{lem21} implies that
\begin{equation}
|\partial _{x}^{\beta }(K_{\lambda ,1}^{\delta _{p},\gamma
})_{1/R}(x)|\preceq R^{n+|\beta |},\text{\ if\ }R|x|\leq 1,  \label{e1}
\end{equation}%
and
\begin{equation}
|\partial _{x}^{\beta }(K_{\lambda ,1}^{\delta _{p},\gamma
})_{1/R}(x)|\preceq \frac{R^{n+|\beta |}}{|Rx|^{n/p}},\text{\ if\ }R|x|>1.
\label{e2}
\end{equation}

By virtue of Taylor's theorem, we see that
\begin{equation*}
(K_{\lambda ,1}^{\delta _{p},\gamma })_{\frac{1}{R}}(x-y)=\sum_{|\alpha |\leq |\beta
|-1}\frac{\left(\partial _{x}^{\alpha }(K_{\lambda ,1}^{\delta _{p},\gamma
})_{\frac{1}{R}}\right)(x)}{\alpha !}y^{\alpha }+\sum_{|\alpha |={|\beta |}}\frac{\left(\partial
_{x}^{\alpha }(K_{\lambda ,1}^{\delta _{p},\gamma })_{\frac{1}{R}}\right)(x-(1-\theta )y)}{%
\alpha !}y^{\alpha },
\end{equation*}%
for some $\theta \in (0,1)$. Using the cancellation condition of $b$, we
have that
\begin{align*}
\quad T_{R,\lambda ,1}^{\delta _{p},\gamma }b(x)& =\int_{Q}b(y)(K_{\lambda
,1}^{\delta _{p},\gamma })_{1/R}(x-y)dy \\
& =\sum_{|\alpha |={|\beta |}}\frac{1}{\alpha !}\int_{Q}b(y)y^{\alpha }\left({
\partial _{x}^{\alpha }(K_{\lambda ,1}^{\delta _{p},\gamma })_{1/R}}\right)
(x-(1-\theta )y)dy,
\end{align*}%
and also
\begin{align*}
\quad \sup_{0<R\leq 2/l}\left\vert T_{R,\lambda ,1}^{\delta _{p},\gamma
}b(x)\right\vert & \preceq \sum_{|\alpha |={|\beta |}}\frac{1}{\alpha !}%
\int_{Q}|b(y)||y|^{|\alpha| }\\
&\quad\times \sup_{0<R\leq 2/l}\left( \sup_{|z-x|<|y|}\left|\left(\partial
^{\alpha }(K_{\lambda ,1}^{\delta _{p},\gamma })_{1/R}\right)(z)\right|\right)dy  \\
& \preceq l^{n+|\beta |-n/p}\cdot \sup_{0<R\leq 2/l}\left(
\sup_{|z-x|<|y|}\left|\left(\partial^{\alpha }(K_{\lambda ,1}^{\delta _{p},\gamma
})_{1/R}\right)(z)\right|\right) .
\end{align*}%

Denote by $U(x)$ the set $\{z\in \mathbb{R}^{n}:|z-x|<|y|,\ y\in Q\ \text{and%
}\ x\in \mathbb{R}^{n}\backslash Q^{\ast }\}$. Observe that the
condition  $z\in U(x)$ implies that ${|x|}/2<|z|<{3|x|}/2$. We need to
divide the set $U(x)$ into two regions: $\{z:|z-x|<|y|\ \text{ and}\
R|z|\leq 1\}$ and $\{z:|z-x|<|y|\ \text{ and}\ R|z|>1\}$. For the first
region, we use the estimate \eqref{e1} to yield
\begin{align*}
& \sup_{0<R\leq 2/l}\left( \sup_{z\in U(x),R|z|\leq 1}
\left|\left(\partial^{\alpha }(K_{\lambda ,1}^{\delta _{p},\gamma
})_{1/R}\right)(z)\right|\right) \\
&\quad\quad\preceq \sup_{0<R\leq 2/l,z\in U(x),R|z|\leq 1}R^{n+|\beta |-n/p}R^{n/p} \\
& \quad\quad\preceq \sup_{0<R\leq 2/l}R^{n+|\beta |-n/p}\cdot \sup_{R|z|\leq 1,z\in
U(x)}R^{n/p} \\
& \quad\quad\preceq l^{-(n+|\beta |-n/p)}\cdot |x|^{-n/p},
\end{align*}%
where we have used $0<n+|\beta |-n/p\leq 1$ and $|z|\simeq |x|$. For the
second region, we use the estimate \eqref{e2} to yield
{\allowdisplaybreaks\begin{align*}
& \sup_{0<R\leq 2/l}\left( \sup_{z\in U(x),R|z|>1}
\left|\left(\partial^{\alpha }(K_{\lambda ,1}^{\delta _{p},\gamma
})_{1/R}\right)(z)\right|\right) \\
&\quad\quad \preceq
\sup_{0<R\leq 2/l,R|z|>1,z\in U(x)}\frac{R^{n+|\beta |}}{|Rz|^{n/p}} \\
&\quad\quad \preceq \sup_{0<R\leq 2/l,z\in U(x)}\left( R^{n+|\beta |-\frac{n}{p}}\cdot
|z|^{-\frac{n}{p}}\right)  \\
& \quad\quad \preceq l^{-(n+|\beta |-n/p)}\cdot |x|^{-n/p}.
\end{align*}}%
Hence, using H\"{o}lder's inequality and the size condition of the $(p,2)$%
-atom $b$, we have
$$\int_{Q}|b(y)|\cdot |y|^{|\alpha| } dy\preceq l^{n+|\beta |-n/p},$$
and of course
\begin{equation*}
\left\vert \left\{ x\in \mathbb{R}^{n}\backslash {Q^{\ast }}%
:\sup_{R>0}\left\vert T_{R,\lambda ,1}^{\delta _{p},\gamma }b(x)\right\vert
>s\right\} \right\vert \preceq \left\vert \left\{ x\in \mathbb{R}%
^{n}\backslash {Q^{\ast }}:|x|^{-n/p}>s\right\} \right\vert \preceq {s^{-p}}.
\end{equation*}

At last, we will deal with the case $j=\infty$. Write
\begin{align*}
&\quad\left|\left\{x\in \mathbb{R}^n: \sup_{R>0} \left|T_{R,\lambda ,\infty}
^{\delta_p ,\gamma }b(x)\right|>s \right\}\right| \\
&=\left|\left\{x\in Q^{*}: \sup_{R>0} \left|T_{R,\lambda,\infty} ^{\delta_p
,\gamma }b(x)\right|>s \right\}\right| + \left|\left\{x\in \mathbb{R}%
^n\backslash {Q^{*}}: \sup_{R>0} \left|T_{R,\lambda ,\infty} ^{\delta_p
,\gamma }b(x)\right|>s \right\}\right|.
\end{align*}

Lemma \ref{lem111} gives that $\sup_{R>0} \left|T_{R,\lambda ,\infty}
^{\delta_p ,\gamma }b(x)\right|$ is bounded on $L^2(\mathbb{R}^n)$. Then the
first term can be estimated by means of H\"{o}lder's inequality and the size
condition of $b$,
\begin{align*}
\left|\left\{x\in Q^{*}: \sup_{R>0} \left|T_{R,\lambda ,\infty} ^{\delta_p
,\gamma }b(x)\right|>s \right\}\right| &\leq s^{-p}\int_{Q^{*}} \sup_{R>0}
\left|T_{R,\lambda ,\infty} ^{\delta_p ,\gamma }b(x)\right|^p dx \\
&\preceq s^{-p} |Q|^{1-p/2} \left\|\sup_{R>0} \left|T_{R,\lambda ,\infty}
^{\delta_p ,\gamma }b\right| \right\|_{L^2(\mathbb{R}^n)}^p \\
&\preceq s^{-p} |Q|^{1-p/2} \|b\|_{L^2(\mathbb{R}^n)}^p \\
&\preceq s^{-p}.
\end{align*}

Next, we turn to estimate the second term
\begin{equation*}
\left|\left\{x\in \mathbb{R}^n\backslash {Q^{*}}: \sup_{R>0}
\left|T_{R,\lambda ,\infty} ^{\delta_p ,\gamma }b(x)\right|>s
\right\}\right|.
\end{equation*}
By the previous proof, it suffices to show that for $x\in \mathbb{R}%
^n\backslash {Q^{*}}$
\begin{equation*}
\sup_{R>0}\left|T_{R,\lambda ,\infty}^{\delta_p ,\gamma }b(x)\right| \preceq
|x|^{-n/p}.
\end{equation*}

Applying the similar method as that of the case   $j=1$, we break $R>0$ into two
cases: $Rl>2$ and $0<Rl\leq 2$. We also take $L=[n(1/p-1)]+1$ for $0<p\leq
1$.

Case 1: $Rl>2$. \ For $x\in \mathbb{R}^{n}\backslash {Q^{\ast }}$ and $%
|y-x|<l$, we have $|x-y|<|x|/2$ and $R|y|>R|x|/2>Rl>2$. Apply Lemma \ref%
{lem3} with $\beta =0$, then
\begin{equation*}
\left\vert (K_{\lambda ,\infty }^{\delta _{p},\gamma })_{1/R}(y)\right\vert
\preceq R^{n}(R|x|)^{-n-[n(1/p-1)]-1}.
\end{equation*}%
Invoking the above estimate and H\"{o}lder's inequality, we have
\begin{equation*}
\left\vert T_{R,\lambda ,\infty }^{\delta _{p},\gamma }b(x)\right\vert
\preceq \Vert b\Vert _{L^{2}(\mathbb{R}^{n})}\left( \int_{Q}\left\vert
(K_{\lambda ,\infty }^{\delta _{p},\gamma })_{1/R}(y)\right\vert
^{2}dy\right) ^{1/2}\preceq |x|^{-n/p}.
\end{equation*}%
% The details:
%\begin{align*}l^{n(1/2-1/p)}\frac{R^n}{(R|x|)^{n+[n(1/p-1)]+1}}l^{n/2}&=\frac{1}{|x|^{n/p}}\frac{1}{(R|x|)^{1+[n(1/p-1)]}}
%\left(\frac{l}{|x|}\right)^{n(1-1/p)}\\
%&\leq \frac{1}{(R|x|)^{n(1/p-1)}}
%\left(\frac{1}{Rl}\right)^{n(1/p-1)} <\left(\frac{1}{2}\right)^{n(1/p-1)}.
%\end{align*}

Case 2: $0<Rl\leq 2$. For any multi-index $\beta $ such that $%
|\beta |=L$, it follows from Lemma \ref{lem3} that
\begin{equation}
|\partial _{x}^{\beta }(K_{\lambda ,\infty }^{\delta _{p},\gamma
})_{1/R}(x)|\preceq \frac{R^{n+|\beta |}}{|Rx|^{n+|\beta |-\lambda }},\text{%
\ if\ }R|x|\leq 1,  \label{e31}
\end{equation}%
and
\begin{equation}
|\partial _{x}^{\beta }(K_{\lambda ,\infty }^{\delta _{p},\gamma
})_{1/R}(x)|\preceq \frac{R^{n+|\beta |}}{|Rx|^{n+L}},\text{\ if\ }R|x|>1.
\label{e32}
\end{equation}

Applying Taylor's theorem and the cancellation condition of $b$, one has
 \begin{align*}
&\quad \sup_{0<R\leq 2/l}\left\vert T_{R,\lambda ,\infty }^{\delta
_{p},\gamma }b(x)\right\vert \\
& \preceq \sum_{|\alpha |={|\beta |}}\frac{1}{%
\alpha !}\int_{Q}|b(y)|\cdot |y|^{|\alpha| }\sup_{0<R\leq 2/l}\left(
\sup_{|z-x|<|y|}\left|\left(\partial ^{\alpha }(K_{\lambda ,\infty }^{\delta
_{p},\gamma })_{1/R}\right)(z)\right|\right)  dy.
\end{align*}%

In the same manner we break the set $U(x)$ into two regions: $\{z:
|z-x|<|y| \ \text{ and}\ R|z|\leq 1 \}$ and $\{z: |z-x|<|y|\ \text{ and}\
R|z|>1\}$, where $U(x)$ is defined as before. For the first region, we use
the estimate \eqref{e31} to yield
\begin{align*}
& \quad \sup_{0<R\leq 2/l}\left(\sup_{z\in U(x), R|z|\leq 1
}\left|\left(\partial ^{\alpha }(K_{\lambda ,\infty }^{\delta
_{p},\gamma })_{1/R}\right)(z)\right|
\right) \preceq \sup_{0<R\leq 2/l,R|z|\leq 1, z\in U(x)}\frac{R^{n+|\beta|}}{%
|Rz|^{n+|\beta|-\lambda}} \\
& \preceq {|x|}^{-n/p} \cdot l^{-(n+|\beta|-n/p)}\cdot \sup_{z\in U(x)}{%
\left(\frac{l}{|z|}\right)}^{n+|\beta|-n/p} \\
&\leq {|x|}^{-n/p}\cdot l^{-(n+|\beta|-n/p)},
\end{align*}
where we have used $0<n+|\beta|-n/p\leq 1$ and ${3|x|}/2>|z|> {|x|}/2>{\sqrt{%
n}l}/2$. Similarly, we use \eqref{e32} to estimate the supremum in the
second region
\begin{align*}
& \quad \sup_{0<R\leq 2/l}\left(\sup_{z\in U(x), R|z|> 1 }
\left|\left(\partial ^{\alpha }(K_{\lambda ,\infty }^{\delta
_{p},\gamma })_{1/R}\right)(z)\right|
\right) \preceq \sup_{0<R\leq 2/l, z\in U(x), R|z|> 1}\frac{R^{n+|\beta|}}{%
|Rz|^{n+L}} \\
& \preceq |x|^{-n/p} \cdot l^{-(n+|\beta|-n/p)}\cdot \sup_{z\in U(x)}{\left(%
\frac{l}{|z|}\right)}^{n+|\beta|-n/p} \\
& \preceq |x|^{-n/p} \cdot l^{-(n+|\beta|-n/p)}.
\end{align*}
Therefore, H\"{o}lder's inequality and the
size estimate of the $(p,2)$-atom $b$ lead to
\begin{equation*}
\sup_{R>0}\left|T_{R,\lambda ,\infty}^{\delta_p ,\gamma }b(x)\right| \preceq
|x|^{-n/p}.
\end{equation*}

Summing all the estimates, we complete the proof of Theorem \ref{t2}.
\qed

\bigskip

\noindent \textit{Proof of Theorem \ref{t3}.} The proof follows a standard
method. We only prove the case  $\lambda <\gamma $. Suppose $f\in
I_{\lambda }(H^{p})(\mathbb{R}^{n})$. Since $\mathscr{S}(\mathbb{R}^{n})\cap I_{\lambda }(H^{p})(\mathbb{R}^{n})$ is
dense in  $I_{\lambda }(H^{p})(\mathbb{R}^{n}),$ for any  $\varepsilon >0$, we choose an  $%
h\in \mathscr{S}(\mathbb{R}^{n})\cap I_{\lambda }(H^{p})(\mathbb{R}^{n})$  such that
\begin{equation*}
\left\Vert f-h\right\Vert _{I_{\lambda }(H^{p})(\mathbb{R}^{n})}<\varepsilon .
\end{equation*}
Since \
\begin{equation*}
\lim_{R\rightarrow \infty }\left\vert R^{\lambda }\left( (S_{R}^{\delta
_{p},\gamma }h)(x)-h(x)\right) \right\vert =0
\end{equation*}%
for all  $x,$  for any fixed $s>0,$ we have%
{\allowdisplaybreaks\begin{eqnarray*}
&&\left\vert \left\{ x\in \mathbb{R}^{n}:\lim \sup_{R\rightarrow \infty
}\left\vert R^{\lambda }\left( (S_{R}^{\delta _{p},\gamma }f)(x)-f(x)\right)
\right\vert >s\right\} \right\vert  \\
&\leq &\left\vert \left\{ x\in \mathbb{R}^{n}:\lim \sup_{R\rightarrow \infty
}\left\vert R^{\lambda }\left( (S_{R}^{\delta _{p},\gamma }\left( f-h\right)
)(x)-\left( f-h\right) (x)\right) \right\vert >s/2\right\} \right\vert  \\
&\preceq &\left( \frac{\Vert f-h\Vert _{I_{\lambda }(H^{p})(\mathbb{R}^{n})}%
}{s}\right) ^{p}<\left( \frac{\varepsilon }{s}\right) ^{p}.
\end{eqnarray*}}%

To show that
\begin{equation*}
(S_{R}^{\delta _{p},\gamma }f)(x)-f(x)=O(1/{R^{\gamma }})\ a.e.\ as\
R\rightarrow \infty
\end{equation*}%
is sharp when $\lambda =\gamma $, we pick an  $f\in I_{\lambda }(H^{p})(\mathbb{R}^{n})\cap
\mathscr{S}(\mathbb{R}^{n})$  satisfying
\begin{equation*}
\int_{%
%TCIMACRO{\U{211d} }%
%BeginExpansion
\mathbb{R}
%EndExpansion
^{n}}\widehat{f}(\xi )\left\vert \xi \right\vert ^{\lambda }e^{2\pi i\xi \cdot
y}d\xi \neq 0
\end{equation*}%
at some  $y\in
%TCIMACRO{\U{211d} }%
%BeginExpansion
\mathbb{R}
%EndExpansion
^{n}.$ Using the continuity, we have that
\begin{equation*}
\int_{%
%TCIMACRO{\U{211d} }%
%BeginExpansion
\mathbb{R}
%EndExpansion
^{n}}\widehat{f}(\xi )\left\vert \xi \right\vert ^{\lambda }e^{2\pi ix\cdot \xi
}d\xi \neq 0,
\end{equation*}%
in a neighborhood of  $y.$

Write%
\begin{eqnarray*}
R^{\lambda }\left( (S_{R}^{\delta _{p},\gamma }f)(x)-f(x)\right)
&=&R^{\lambda }\int_{%
%TCIMACRO{\U{211d} }%
%BeginExpansion
\mathbb{R}
%EndExpansion
^{n}}\left( \left( 1-\frac{{|\xi |^{\lambda }}}{R^{\lambda }}\right)
_{+}^{\delta _{p}}-1\right) \widehat{f}(\xi )e^{2\pi ix\cdot \xi }d\xi  \\
&=&R^{\lambda }\int_{\left\vert \xi \right\vert <R/2}\left( \left( 1-\frac{{%
|\xi |^{\lambda }}}{R^{\lambda }}\right) ^{\delta _{p}}-1\right)
\widehat{f}(\xi )e^{2\pi ix\cdot \xi }d\xi  \\
&&+R^{\lambda }\int_{\left\vert \xi \right\vert \geq R/2}\left( \left( 1-%
\frac{{|\xi |^{\lambda }}}{R^{\lambda }}\right) _{+}^{\delta _{p}}-1\right)
\widehat{f}(\xi )e^{ix\cdot \xi }d\xi  \\
&=&I_{R}(x)+II_{R}(x).
\end{eqnarray*}%
Since  $f\in \mathscr{S}(\mathbb{R}^n),$ we have
\begin{equation*}
\left\vert II_{R}(x)\right\vert \preceq R^{\lambda }\int_{\left\vert \xi
\right\vert \geq R/2}\left\vert \widehat{f}(\xi )\right\vert d\xi \preceq
R^{-L},
\end{equation*}%
for any  $L>0$. Using the Taylor expansion, we have
\begin{equation*}
I_{R}(x)=-\delta _{p}\int_{\left\vert \xi \right\vert <R/2}{|\xi |^{\lambda }%
}\widehat{f}(\xi )e^{2\pi ix\cdot \xi }d\xi +O(\frac{1}{R^{\lambda }}).
\end{equation*}%
Thus, we obtain that \
\begin{equation*}
\lim_{R\rightarrow \infty }R^{\lambda }\left( (S_{R}^{\delta _{p},\gamma
}f)(x)-f(x)\right) \neq 0
\end{equation*}%
in a set of positive measure. This concludes the proof of the theorem.
\qed

\section{Proof of Theorem \protect\ref{t1}}

\label{s4} We need to define a slight refinement of the maximal
function by the form
\begin{equation*}
(\mathcal{M}_{\lambda }^{\delta ,\gamma }f)(x)=\sup_{R>1}|R^{\lambda+\delta-%
\frac{n-1}{2} }\{(S_{R}^{\delta ,\gamma }f)(x)-f(x)\}|.
\end{equation*}%
Theorem \ref{t1} will be proved partially based on the following estimate on
$\mathcal{M}_{\lambda }^{\delta ,\gamma }f$.
For $0< \alpha<n$, the  fractional maximal operator $M_{\alpha}$ is defined by
$$(M_{\alpha}f)(x)=\sup_{Q}\frac{1}{|Q(x,l)|^{1-\alpha/n}}\int_{Q(x,l)}|f(y)|dy,$$
where the supremum is taken over all cubes $Q(x,l)$ in $\mathbb{R}^n$  whose center are at the origin
$x$, diameter $l(>0)$ and with the sides parallel to the axes. When $\alpha=0$, the operator $M_{0}$ is reduced to
the Hardy-Littlewood maximal operator, which is simply denoted by $M$.
\begin{thm}
\label{t20} Let $\delta <(n-1)/2$ and $1\leq p<\infty$. If $f\in I_{\lambda }(L^{p})(\mathbb{R}%
^{n})$, then for $\frac{n-1%
}{2}-\delta \leq \lambda \leq \gamma$, we have
\begin{equation*}
(\mathcal{M}_{\lambda }^{\delta ,\gamma }f)(x)\preceq ({M}_{\frac{n-1}{2}%
-\delta }g)(x)+I_{\frac{n-1}{2}%
-\delta}(|g|)(x)+({M}g)(x),
\end{equation*}%
where ${g}=:I_{-\lambda }f$,  $M_{(n-1)/2-\delta }$ is the fractional
Hardy-Littlewood maximal operator, and $I_{(n-1)/2-\delta }$ is the Riesz potential operator of
order $(n-1)/2-\delta$.
\end{thm}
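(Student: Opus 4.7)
The plan is to adapt the multiplier decomposition from Section 2. Setting $g=I_{-\lambda}f$, one computes that the Fourier transform of $R^{\lambda+\delta-\frac{n-1}{2}}\{(S_R^{\delta,\gamma}f)(x)-f(x)\}$ equals $R^{\delta-\frac{n-1}{2}}\,\nu(\xi/R)\,\widehat{g}(\xi)$, where $\nu(\eta)=((1-|\eta|^\gamma)_+^\delta-1)/|\eta|^\lambda$ coincides with the multiplier $\mu$ of Section 2, now for a general $\delta<(n-1)/2$ in place of $\delta_p$. Applying the same cutoff $\phi_0+\phi_1+\phi_\infty=1$ used in Theorem \ref{t2} yields
\[
\mathcal{M}_\lambda^{\delta,\gamma}f(x)\leq \sup_{R>1} R^{\delta-\frac{n-1}{2}}\sum_{j\in\{0,1,\infty\}}|T_{R,\lambda,j}^{\delta,\gamma}g(x)|,
\]
so it suffices to bound each of the three pieces by some combination of $M_{\frac{n-1}{2}-\delta}g$, $I_{\frac{n-1}{2}-\delta}(|g|)$, and $Mg$. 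Since the proofs of Lemmas \ref{lem2}, \ref{lem21}, and \ref{lem3} only use $\delta>-1$ (and not the specific value $\delta_p$), the identical pointwise kernel estimates hold with $\delta$ in place of $\delta_p$.

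For the $j=1$ piece I would split $((K_{\lambda,1}^{\delta,\gamma})_{1/R}\ast g)(x)$ at $|x-y|=1/R$. On the near region the kernel is $\preceq R^n$, so after multiplying by $R^{\delta-(n-1)/2}$ the near integral becomes $R^{\frac{n+1}{2}+\delta}\int_{|x-y|\leq 1/R}|g|\,dy$, which is exactly $\preceq M_{\frac{n-1}{2}-\delta}g(x)$ since $|B(x,1/R)|^{1-\alpha/n}\sim R^{\alpha-n}$ for $\alpha=(n-1)/2-\delta$. On the far region the kernel is $\preceq R^{(n-1)/2-\delta}|x-y|^{-\frac{n+1}{2}-\delta}$, so the prefactor cancels exactly and the integral is dominated by $I_{\frac{n-1}{2}-\delta}(|g|)(x)$ because $\frac{n+1}{2}+\delta=n-((n-1)/2-\delta)$. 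The $j=\infty$ piece is easier: both the near kernel, bounded by $R^\lambda|x|^{-n+\lambda}\chi_{\{|x|<1/R\}}$ with $L^1$-mass $O(1)$, and the far kernel, bounded by $R^{-L}|x|^{-n-L}\chi_{\{|x|\geq 1/R\}}$ again with $L^1$-mass $O(1)$, possess radially decreasing integrable majorants of unit norm, so convolution with either is pointwise $\preceq Mg(x)$, and the prefactor $R^{\delta-(n-1)/2}<1$ is absorbed.

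For the $j=0$ piece, the near part is dominated by $Mg(x)$ through the same majorant argument. For the far part, when $\lambda<\gamma$ the kernel $|x|^{-(n+\gamma-\lambda)}$ is integrable on $\{|x|>1/R\}$ with $L^1$-mass $O(R^{\gamma-\lambda})$, which matches the prefactor $R^{\lambda-\gamma}\cdot R^{\delta-(n-1)/2}$ and yields another $Mg(x)$ bound. The borderline $\lambda=\gamma$ is the principal obstacle, since the kernel becomes $|x|^{-n}$ and is no longer integrable at infinity. Here I would split the far region further at $|x-y|=1$: on $1/R<|x-y|\leq 1$ a dyadic annular decomposition produces $\log R\cdot Mg(x)$, which is absorbed by the strictly negative power $R^{\delta-(n-1)/2}$ (as $R^{-\varepsilon}\log R$ is bounded for $\varepsilon>0$); on $|x-y|>1$ the elementary estimate $|x-y|^{-n}\leq |x-y|^{-\frac{n+1}{2}-\delta}$, valid since $\delta\leq (n-1)/2$, yields control by $I_{\frac{n-1}{2}-\delta}(|g|)(x)$. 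Summing the three contributions completes the proof; the only real difficulty is the delicate balancing in the $j=0$ piece at $\lambda=\gamma$, where the restriction $R>1$ and the fractional gain $R^{\delta-(n-1)/2}<1$ are both essential.
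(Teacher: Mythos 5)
Your proposal is correct and follows essentially the same route as the paper: the same decomposition of the multiplier into the pieces supported near $0$, near $1$, and near $\infty$, the same pointwise kernel bounds (Lemmas \ref{lem31}--\ref{lem33}), and the same near/far splitting of each convolution, with each piece landing on one of $M_{\frac{n-1}{2}-\delta}g$, $I_{\frac{n-1}{2}-\delta}(|g|)$, or $Mg$. The only deviation is your treatment of the borderline $\lambda=\gamma$ for the $j=0$ piece: your $\log R$ absorption is valid but unnecessary, since Lemma \ref{lem32} gives the stronger bound $|K_{\gamma,0}^{\delta,\gamma}(x)|\preceq (1+|x|)^{-L}$ for every $L$ in that case, so the kernel already has an integrable radially decreasing majorant and the piece is dominated by $Mg$ directly.
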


We begin with some useful lemmas. Following the previous proof of lemma \ref{lem21}, we
can obtain the following lemma.
\begin{lem}
\label{lem31} For $\frac{n-1}{2}-\delta \leq \lambda\leq \gamma$, we have
\begin{equation*}
|K_{\lambda,1}^{\delta,\gamma}(x)|\preceq 1, \text{\
if\ } |x|\leq 1,
\end{equation*}
and
\begin{equation*}
|K_{\lambda,1}^{\delta,\gamma}(x)|\preceq |x|^{-\frac{%
n+1}{2}-\delta}, \text{\ if\ }|x|> 1.
\end{equation*}
\end{lem}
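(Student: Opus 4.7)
The plan is to prove Lemma \ref{lem31} by adapting the proof of Lemma \ref{lem21} to the present setting, where we have $\beta=0$ and $\delta$ is an arbitrary parameter below the critical index instead of the specific value $\delta_p=n/p-(n+1)/2$. First, I would invoke the Bessel-function representation from \cite[p.155]{SW} to write
\begin{equation*}
K_{\lambda,1}^{\delta,\gamma}(x)=(2\pi)^{n/2}\int_{1/4}^{1}\phi_{1}(t)(1-t^{\gamma})^{\delta}t^{n-\lambda-1}V_{(n-2)/2}(2\pi|x|t)\,dt.
\end{equation*}
For $|x|\le 1$, the small-argument estimate $|J_{v}(s)|\preceq s^{\mathrm{Re}\,v}$ on $0<s\le 1$ gives $|V_{(n-2)/2}(2\pi|x|t)|\preceq 1$, and since $\delta>-1$ the remaining integral is uniformly bounded in $x$. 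This immediately yields the first assertion $|K_{\lambda,1}^{\delta,\gamma}(x)|\preceq 1$.

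For $|x|>1$, I would plug in the asymptotic expansion of $J_{v}(s)$ at infinity, reducing the problem to controlling finitely many oscillatory integrals of the form
\begin{equation*}
G_{j}(x)=|x|^{-\frac{n-1}{2}-j}\int_{1/4}^{1}\phi_{1}(t)(1-t^{\gamma})^{\delta}t^{\frac{n-1}{2}-\lambda-j}e^{\pm 2\pi i|x|t}\,dt,\qquad 0\le j\le N_{0},
\end{equation*}
plus a remainder of size $|x|^{-(n+1)/2-N_{0}-1}$, which is acceptable once $N_{0}$ is chosen large enough. The target bound $|x|^{-(n+1)/2-\delta}$ requires squeezing an extra factor $|x|^{-\delta-\tfrac12}$ out of these integrals through repeated integration by parts in $t$.

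Following the dichotomy used in Lemma \ref{lem21}, I split according to whether $\delta$ is an integer. If $\delta\in\mathbb{N}$, set $N_{0}=\delta$; then $G_{N_{0}}$ is trivial and, for $0\le j<N_{0}$, integrating by parts $\delta-j$ times turns $(1-t^{\gamma})^{\delta}$ into $(1-t^{\gamma})^{j}$ (times a smooth factor), after which a further integration by parts gives $|G_{j}(x)|\preceq|x|^{-(n+1)/2-\delta}$. If $\delta\notin\mathbb{N}$, take $N_{0}=[\delta]+2$ and integrate by parts $[\delta]+1-j$ times, producing an integral with weight $(1-t^{\gamma})^{j-([\delta]+1-\delta)}$ whose exponent lies in $(-1,0)$ when $j=0$.

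The main obstacle is precisely this non-integer case, where the weight is integrable but not smooth at $t=1$. As in the proof of Lemma \ref{lem21}, I would overcome it by splitting the $t$-integral at $t=1-1/|x|$: on $[1/4,1-1/|x|]$ one further integration by parts costs a factor of $|x|$ but gains a factor of $|x|^{-1}$ and the bad weight stays integrable, while on $[1-1/|x|,1]$ the bound $(1-t^{\gamma})^{j+\delta-[\delta]-1}$ is integrated directly against the length $1/|x|$ of the interval to produce a factor $(1/|x|)^{j+\delta-[\delta]}$. Combining both pieces yields $|G_{j}(x)|\preceq|x|^{-(n+1)/2-\delta}$ in the non-integer case as well, and summing the contributions completes the proof.
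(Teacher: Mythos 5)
Your proposal is correct and follows essentially the same route as the paper: the paper proves Lemma \ref{lem31} simply by remarking that the argument of Lemma \ref{lem21} (Bessel representation, small-argument bound for $|x|\le 1$, asymptotic expansion and the integer/non-integer dichotomy with the split of the $t$-integral at $1-1/|x|$ for $|x|>1$) carries over verbatim with $\beta=0$ and $\delta$ in place of $\delta_p$, which is exactly what you have written out.
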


We also need the following lemmas in \cite{FZ}.

\begin{lem}[\protect\cite{FZ}]
\label{lem32} For $0\leq \lambda<\gamma$ and $\delta>-1$, we have
\begin{equation*}
|K_{\lambda,0}^{\delta,\gamma}(x)|\preceq \frac{1}{(1+|x|)^{n+\gamma-\lambda}%
},
\end{equation*}
and for $\lambda=\gamma$,
\begin{equation*}
|K_{\gamma,0}^{\delta,\gamma}(x)|\preceq \frac{1}{(1+|x|)^{L}},\ \text{for
any}\ L>0.
\end{equation*}
\end{lem}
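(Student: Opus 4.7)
The plan is to follow closely the strategy from Lemma \ref{lem2}, specializing to $\beta=0$ but allowing any $\delta>-1$ in place of $\delta_p$. Recall
\[
K_{\lambda,0}^{\delta,\gamma}(x)=\int_{\mathbb{R}^n}\frac{\phi_0(\xi)}{|\xi|^\lambda}\left((1-|\xi|^\gamma)_+^{\delta}-1\right)e^{2\pi i\xi\cdot x}\,d\xi,
\]
with $\supp\phi_0\subseteq\{|\xi|<1/2\}$. By Taylor's formula on this support, $(1-|\xi|^\gamma)_+^{\delta}-1=-\delta|\xi|^\gamma+\omega(\xi)$, where $\omega$ is $C^\infty$ on $\mathbb{R}^n\setminus\{0\}$ with $\omega(\xi)=O(|\xi|^{2\gamma})$ and $\nabla^k\omega(\xi)=O(|\xi|^{2\gamma-k})$ as $|\xi|\to 0$.

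For $0\leq\lambda<\gamma$, I would reduce the argument to controlling the model integral
\[
\mathfrak{K}_0(x)=\int_{\mathbb{R}^n}\phi_0(\xi)\,|\xi|^{\gamma-\lambda}\,e^{2\pi i\xi\cdot x}\,d\xi,
\]
since the contribution from $\omega$ is strictly better (extra decay of order $|\xi|^\gamma$ near the origin). For $|x|\leq 1$, the trivial size estimate gives $|\mathfrak{K}_0(x)|\preceq 1$. For $|x|>1$, I decompose using the cones $E_k$ and, by symmetry, focus on the piece cut off by $\chi_{E_1}$. Integrating by parts $n$ times in $\xi_1$ produces the factor $|x|^{-n}$; I then split the remaining integral into the ball $\{|\xi|<2/|x|\}$ (handled directly by the size bound, which yields the extra factor $|x|^{-(\gamma-\lambda)}$ from integrability of $|\xi|^{\gamma-\lambda-n}$) and its complement $\{|\xi|>1/|x|\}$ (handled by one further integration by parts), exactly as in the $\beta=0$ specialization of the calculation carried out in Lemma \ref{lem2}. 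This gives the bound $(1+|x|)^{-(n+\gamma-\lambda)}$.

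For $\lambda=\gamma$, the main term $\phi_0(\xi)|\xi|^{\gamma-\lambda}$ collapses to $\phi_0(\xi)$, a compactly supported Schwartz function whose Fourier transform decays faster than any polynomial. For the remainder $\phi_0(\xi)\omega(\xi)/|\xi|^\gamma$, I again restrict to each cone $E_k$ and integrate by parts in $\xi_k$ an arbitrary number of times; the estimates $\omega(\xi)=O(|\xi|^{2\gamma})$ and $\nabla^k\omega(\xi)=O(|\xi|^{2\gamma-k})$ guarantee integrability of every intermediate integrand near the origin, and yield decay $(1+|x|)^{-L}$ for any $L>0$.

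The main obstacle is the careful bookkeeping in the integration-by-parts procedure for the $\lambda=\gamma$ case: one must keep track of how derivatives distribute over $\phi_0$, the homogeneous factor, and the non-smooth remainder $\omega$, and verify that each sub-term produced by the product rule remains integrable near $\xi=0$, relying only on the $|\xi|^{2\gamma-k}$-type control on $\nabla^k\omega$. Once this bookkeeping is organized as in the proof of Lemma \ref{lem2}, the remaining size estimate for $|x|\leq 1$ follows immediately from the integrability of $\phi_0(\xi)|\xi|^{\gamma-\lambda}$ (and its $\omega$-analogue).
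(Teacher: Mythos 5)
The first half of your argument (the case $0\le\lambda<\gamma$) is fine and is essentially the $\beta=0$ specialization of the proof of Lemma \ref{lem2} given in this paper; nothing in that computation uses the particular value $\delta_p$ beyond $\delta>-1$, so the transfer is legitimate. (Note that the paper itself does not prove Lemma \ref{lem32} here at all --- it is imported from \cite{FZ} --- so Lemma \ref{lem2} is the only in-paper template to compare against.)

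The case $\lambda=\gamma$, however, contains a genuine gap, and it is exactly at the step you flag as ``bookkeeping.'' After splitting off the smooth main term $\delta\phi_0(\xi)$, the remainder multiplier is $\phi_0(\xi)\,\omega(\xi)|\xi|^{-\gamma}$, which is $O(|\xi|^{\gamma})$ near the origin with $k$-th derivatives only $O(|\xi|^{\gamma-k})$. After $N$ integrations by parts the integrand therefore behaves like $|\xi|^{\gamma-N}$ near $\xi=0$, which fails to be locally integrable as soon as $N\ge n+\gamma$. So it is simply not true that ``every intermediate integrand'' is integrable for an arbitrary number of integrations by parts; the method saturates at decay of order $(1+|x|)^{-(n+\gamma)}$, not $(1+|x|)^{-L}$ for every $L$. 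This is not a repairable bookkeeping issue but a real obstruction: writing $h(u)=((1-u)^{\delta}-1)/u$, the multiplier is $\phi_0(\xi)h(|\xi|^{\gamma})=\phi_0(\xi)\bigl(-\delta+\tfrac{\delta(\delta-1)}{2}|\xi|^{\gamma}+O(|\xi|^{2\gamma})\bigr)$, and for $\gamma$ not an even integer the piece $c\,|\xi|^{\gamma}\phi_0(\xi)$ (with $c\ne0$ unless $\delta\in\{0,1\}$) has inverse Fourier transform with exact leading asymptotics a nonzero multiple of $|x|^{-n-\gamma}$. The ``any $L$'' conclusion is immediate only when $h(|\xi|^{\gamma})$ is genuinely $C^{\infty}$ at the origin --- e.g.\ for $\gamma=2$ (or any even integer $\gamma$), where each $|\xi|^{j\gamma}$ is a polynomial and the multiplier is a compactly supported smooth function with Schwartz inverse transform. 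You should either restrict the second assertion to that case, or consult \cite{FZ} to see what additional structure is being used there; your proposed integration-by-parts scheme alone cannot deliver arbitrary polynomial decay.
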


\begin{lem}[\protect\cite{FZ}]
\label{lem33} For $\lambda> 0$ and $\delta>-1$, we have that
\begin{equation*}
|K_{\lambda,\infty}^{\delta,\gamma}(x)|\preceq \frac{1}{|x|^{n-\lambda}},\
\text{if}\ |x|<1
\end{equation*}
and if $|x|\geq 1$
\begin{equation*}
|K_{\lambda,\infty}^{\delta,\gamma}(x)|\preceq \frac{1}{|x|^{L}},\ \text{for
any}\ L>n.
\end{equation*}
\end{lem}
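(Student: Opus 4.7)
The plan is to estimate the oscillatory integral
\begin{equation*}
K_{\lambda,\infty}^{\delta,\gamma}(x) = \int_{\mathbb{R}^n} \frac{\Psi_\infty(\xi)}{|\xi|^\lambda} e^{2\pi i \xi \cdot x}\, d\xi
\end{equation*}
directly, noting at the outset that this kernel is actually independent of both $\delta$ and $\gamma$, so the hypothesis $\delta>-1$ plays no role. As in the proofs of Lemmas \ref{lem2} and \ref{lem3}, I would decompose using the cones $E_k$ with their smooth cutoffs $\chi_{E_k}$ and reduce to bounding $\chi_{E_1}(x)K_{\lambda,\infty}^{\delta,\gamma}(x)$, since the remaining cones are handled identically after permuting coordinates. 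On $E_1$ one has $|x_1|\gtrsim|x|$, which is what allows us to exploit oscillation via integration by parts in the $\xi_1$ variable.

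For $|x|\geq 1$, I would integrate by parts in $\xi_1$ exactly $L$ times, with $L>n$ a fixed positive integer. By Leibniz's rule each derivative lands either on $\Psi_\infty(\xi)$ -- producing a function supported on the compact annulus $\{1/4<|\xi|<1/2\}$ that contributes only a bounded integral -- or on $|\xi|^{-\lambda}$, lowering its decay by one power of $|\xi|$ at each step. The worst remaining integrand on $\{|\xi|>1/2\}$ is then controlled by $|\xi|^{-\lambda-L}$, which is integrable at infinity because $\lambda+L>n$. The factor $|x_1|^{-L}\sim|x|^{-L}$ gained from the integration by parts yields the desired bound $|K_{\lambda,\infty}^{\delta,\gamma}(x)|\preceq|x|^{-L}$.

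For $|x|<1$, I would insert the smooth cutoff $\eta(|x|\xi)$ (with $\eta$ as in Lemma \ref{lem2}, so $\eta=1$ for $|y|\leq 1$ and $\eta=0$ for $|y|\geq 2$) to split the integral into a low-frequency piece, estimated trivially as
\begin{equation*}
\int_{1/4<|\xi|\lesssim 1/|x|}|\xi|^{-\lambda}\,d\xi \preceq |x|^{\lambda-n},
\end{equation*}
and a high-frequency piece, handled by integrating by parts in $\xi_1$ on the cone $E_1$ sufficiently many times (say $k$ times with $k$ chosen so that $\lambda+k>n$). Derivatives on $(1-\eta(|x|\xi))$ localize to the annulus $\{1<|x\xi|<2\}$ and produce chain-rule factors $|x|^{|\alpha|}$; derivatives on $|\xi|^{-\lambda}$ reduce its decay by one power at a time. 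After assembling all Leibniz terms, each integrand is dominated by $|\xi|^{-\lambda-k}$ on $\{|\xi|>1/(2|x|)\}$, giving integrals of size $|x|^{\lambda+k-n}$; combined with the external gain $|x_1|^{-k}\sim|x|^{-k}$ this reassembles to the claimed $|x|^{\lambda-n}$. The main obstacle will be the careful bookkeeping of the boundary contributions from applying the chain rule to $\eta(|x|\xi)$: each factor $|x|^{|\alpha|}$ must be balanced against the localized shell $|\xi|\sim 1/|x|$ (of volume $\sim|x|^{-n}$) on the support of $(\partial^\alpha\eta)(|x|\xi)$, together with the accumulated $|\xi|^{-\lambda-k}$ decay, so that everything collapses cleanly to the single power $|x|^{\lambda-n}$. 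This is structurally identical to the $\beta=0$ specialization of the argument already carried out in \eqref{d111}--\eqref{d16} of Lemma \ref{lem3}.
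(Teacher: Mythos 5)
Your proposal is correct and follows essentially the same route the authors use: the paper itself only cites \cite{FZ} for Lemma \ref{lem33}, but its detailed proof of Lemma \ref{lem3} is exactly this argument (cone decomposition $\chi_{E_k}$, repeated integration by parts in $\xi_1$, and for $|x|<1$ the splitting by $\eta(|x|\xi)$ into a trivially estimated low-frequency piece and an oscillatory high-frequency piece), of which your argument is the $\beta=0$ specialization, as you note. The only caveat, shared with the paper, is that the bound $|x|^{\lambda-n}$ from the low-frequency piece implicitly uses $\lambda<n$, which is the regime relevant to the applications.
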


\medskip

\noindent\textit{Proof of Theorem \ref{t20}.} For $j=0,1,\infty$, define the
operators
\begin{equation*}
(\mathcal {T}_{R,\lambda ,j}^{\delta ,\gamma }g)^{\wedge }(\xi)=R^{\delta-{(n-1)}/{2}}
m_{\lambda ,j}^{\delta ,\gamma }(\xi/R)\widehat{g}(\xi),
\end{equation*}%
and their associated maximal operators
\begin{equation*}
(\mathcal{M}_{\lambda ,j}^{\delta ,\gamma }g)(x)=\sup_{R>1}|(T_{R,\lambda
,j}^{\delta ,\gamma }g)(x)|.
\end{equation*}
Denote by $(K_{\lambda ,j}^{\delta ,\gamma })_{1/R}(x)=R^{n}K_{\lambda
,j}^{\delta,\gamma }(Rx)$ the kernel of $\mathcal {T}_{R,\lambda ,j}^{\delta,\gamma }$,
where
\begin{equation*}
K_{\lambda ,j}^{\delta ,\gamma }(x)={m_{\lambda ,j}^{\delta ,\gamma }}^{\vee
}(x),\ j=0,1,\infty.
\end{equation*}

The sublinearity of the maximal function $\mathcal{M}_{\lambda }^{\delta
,\gamma }f$  leads to
\begin{equation*}
(\mathcal{M}_{\lambda }^{\delta ,\gamma }f)(x)\leq \sum_{j=0,1,\infty}(%
\mathcal{M}_{\lambda ,j}^{\delta ,\gamma }g)(x),
\end{equation*}%
where $\widehat{g}(\xi )=(I_{-\lambda }f)^{\wedge }(\xi )=|\xi |^{\lambda }%
\widehat{f}(\xi )$.

Consider $j=0$.  If $0\leq \lambda <\gamma$, using Lemma \ref{lem32}, then we have
\begin{align*}
& \quad \left\vert (\mathcal{T}_{R,\lambda ,0}^{\delta ,\gamma }g)(x)\right\vert
=\left\vert \int_{\mathbb{R}^{n}}R^{n+\delta-{(n-1)}/{2}}K_{\lambda
,0}^{\delta ,\gamma }(Ry)g(x-y)dy\right\vert \\
& \leq R^{n+\delta-{(n-1)}/{2}}\left(\int_{|Ry|<1}|K_{\lambda ,0}^{\delta ,\gamma
}(Ry)||g(x-y)|dy+\int_{|yR|\geq 1}|K_{\lambda
,0}^{\delta ,\gamma }(Ry)||g(x-y)|dy\right) \\
& \preceq R^{n+\delta-{(n-1)}/{2}}\left(\int_{|y|<1/R}|g(x-y)|dy+\sum_{k=0}^{\infty
}\int_{2^{k}/R\leq |y|<2^{k+1}/R}\frac{|g(x-y)|}{%
|Ry|^{n+\gamma -\lambda }}dy\right).
\end{align*}
Then by taking the supremum over $R>1$, we yield that
\begin{align*}
& \quad \sup_{R>1}\left\vert (\mathcal{T}_{R,\lambda ,0}^{\delta ,\gamma }g)(x)\right\vert\\
& \preceq (M_{\frac{n-1}{2}-\delta}g)(x)+\sum_{k=0}^{\infty
}2^{-k((n-1)/2-\delta+\gamma-\lambda)}(M_{\frac{n-1}{2}-\delta}g)(x) \\
& \preceq (M_{\frac{n-1}{2}-\delta}g)(x),
\end{align*} because of $(n-1)/2-\delta+\gamma-\lambda>0$.
The case $\lambda =\gamma$ is much simpler. We skip the details.

We next estimate the case $j=1$. Applying Lemma \ref{lem31}, we have
\begin{align*}
& \quad \left\vert (\mathcal{T}_{R,\lambda ,1}^{\delta ,\gamma }g)(x)\right\vert
=\left\vert \int_{\mathbb{R}^{n}}R^{n+\delta-{(n-1)}/{2}}K_{\lambda
,1}^{\delta ,\gamma }(Ry)g(x-y)dy\right\vert \\
& \preceq R^{n+\delta-{(n-1)}/{2}}\int_{|y|<1/R}|g(x-y)|dy+\int_{|y|\geq
1/R} \frac{R^{n+\delta-{(n-1)}/{2}}}{(|Ry|)^{{(n+1)}/2+\delta }}|g(x-y)|dy
\\
& \preceq R^{n+\delta-{(n-1)}/{2}}\int_{|y|<1/R}|g(x-y)|dy+\int_{|y|\geq
1/R}\frac{|g(x-y)|}{%
|y|^{{(n+1)}/2+\delta }}dy.
\end{align*}
Since $\delta<{(n-1)}/2$, we have
\begin{align*}
\sup_{R>1}\left\vert (\mathcal{T}_{R,\lambda ,1}^{\delta ,\gamma }g)(x)\right\vert
\preceq (Mg)(x)+I_{\frac{n-1}{2}-\delta}(|g|)(x).
\end{align*}

We now deal with the case $j=\infty$. It is easy to verify the case $\lambda
=\gamma $. If $0\leq \lambda <\gamma $, using Lemma \ref{lem33} by choosing $L=n$,
then we have
{\allowdisplaybreaks
\begin{align*}
& \quad \left\vert (\mathcal{T}_{R,\lambda ,\infty}^{\delta ,\gamma }g)(x)\right\vert
=\left\vert \int_{\mathbb{R}^{n}}R^{n+\delta-{(n-1)}/{2}}K_{\lambda
,\infty}^{\delta ,\gamma }(Ry)g(x-y)dy\right\vert \\
& \leq R^{n+\delta-{(n-1)}/{2}}\left(\int_{|Ry|<1}\frac{|g(x-y)|}{|Ry|^{n-\lambda }}%
dy + \int_{|yR|\geq 1}\frac{|g(x-y)|}{|Ry|^{n}}%
dy\right)\\
& \leq R^{n+\delta-{(n-1)}/{2}}\sum_{k=-\infty}^{0 }\int_{2^{k-1}\leq |Ry|<2^{k}}\frac{|g(x-y)|%
}{|Ry|^{n-\lambda }}dy \\
&\quad + R^{n+\delta-{(n-1)}/{2}}\sum_{k=0}^{\infty}\int_{2^{k}\leq |yR|< 2^{k+1}}%
\frac{|g(x-y)|}{|Ry|^{n}}dy.
\end{align*}}%
Then for $R>1$, we have
{\allowdisplaybreaks\begin{align*}
& \quad \sup_{R>1}\left\vert (\mathcal{T}_{R,\lambda ,\infty}^{\delta ,\gamma }g)(x)\right\vert\\
& \preceq \sum_{k=-\infty}^{0 }2^{k\lambda}\sup_{R>0}\left(\frac{R}{2^k}%
\right)^n\int_{|Ry|<2^{k}}|g(x-y)|dy \\
&\quad+ \sum_{k=0}^{\infty} 2^{k(\delta-(n-1)/2)}\sup_{R>0}\left(\frac{R}{2^{k+1}}%
\right)^{n-((n-1)/2-\delta)}\int_{|Ry|<2^{k+1}}|g(x-y)|dy \\
& \preceq (Mg)(x)+ (M_{\frac{n-1}{2}-\delta}g)(x),
\end{align*}}where we have used $\lambda\geq (n-1)/2-\delta>0$.
 Therefore, Theorem \ref{t20} is completely proved. \qed

\bigskip

\noindent\textit{Proof of Theorem \ref{t1}.} This proof is similar to
that of Theorem \ref{t3}. Suppose $f\in I_{\lambda}(L^p)(\mathbb{R}^n)$ for $%
1\leq p<\infty$. Then for any $\varepsilon>0$, we decompose $f$ into two
functions
\begin{equation*}
f(x)=g(x)+h(x),
\end{equation*}
such that $g\in \mathscr{S}(\mathbb{R}^{n})$ and $\|I_{-\lambda} h\|_{L^p(%
\mathbb{R}^n)}<\varepsilon$.

Since $g\in \mathscr{S}(\mathbb{R}^{n})$,  for any fixed $s>0,$ we conclude that
\begin{equation*}
\left\vert \left\{ x\in \mathbb{R}^{n}:\lim \sup_{R\rightarrow \infty
}\left\vert R^{\lambda+\delta-\frac{n-1}{2} }\left( (S_{R}^{\delta,\gamma
}g)(x)-g(x)\right) \right\vert >s /2\right\} \right\vert =0.
\end{equation*}%
Let $\widetilde{h}=I_{-\lambda}h$. Making use of the sublinearity of maximal
function, together with Theorem \ref{t20}, it follows that for any $s >0,$
{\allowdisplaybreaks \begin{align*}
& \quad \left\vert \left\{ x\in \mathbb{R}^{n}:\lim \sup_{R\rightarrow
\infty }\left\vert R^{\lambda+\delta-\frac{n-1}{2} }\left(
(S_{R}^{\delta,\gamma }f)(x)-f(x)\right) \right\vert >s\right\} \right\vert
\\
& \leq \left\vert \left\{ x\in \mathbb{R}^{n}:\left( \mathcal{M}_{\lambda }
^{\delta,\gamma }\widetilde{h}\right) (x)>s /2\right\} \right\vert \\
& \leq \left\vert \left\{ x\in \mathbb{R}^{n}:C\left( {M}_{\frac{n-1}{2}%
-\delta}\widetilde{h}\right) (x)>s /6\right\} \right\vert \\
&\quad +\left\vert \left\{ x\in \mathbb{R}^{n}:C {I}_{\frac{n-1}{2}%
-\delta}\left(\widetilde{|h|}\right) (x)>s /6\right\} \right\vert \\
& \quad+ \left\vert \left\{ x\in \mathbb{R}^{n}: C( {M} \widetilde{h})(x)
>s /6\right\} \right\vert \\
& \preceq \left(\frac{\|\widetilde{h}\|_{L^p(\mathbb{R}^n)}}{s}\right)^{1/q}
+\left(\frac{\|\widetilde{h}\|_{L^p(\mathbb{R}^n)}}{s}\right)^{1/p} \\
&\leq \left(s^{-1/q}+s^{-1/p}\right)\cdot\varepsilon^{1/q},
\end{align*}}where we have used that the boundednesses of the Riesz
potential operator $I_{\frac{n-1}{2}-\delta}$ (the
fractional maximal operator $M_{\frac{n-1}{2}-\delta}$)$: L^{p}(\mathbb{R}%
^n)\rightarrow L^{q,\infty}(\mathbb{R}^n)$ with $1/p-1/q=\frac{n-1-2\delta}{2n}$
 for $1\leq p<\frac{2n}{n-1-2\delta}$ and of the Hardy-Littlewood maximal operator
 $M: L^p(\mathbb{R}^n)\rightarrow L^{p,\infty}(%
\mathbb{R}^n)$ for $1\leq p<\infty$. This concludes the proof of the theorem.

\qed

%%%%%%%%%%%%%%%%%%%%%%%%%%%%%%

%{\bf Acknowledgement:} The research was supported by National Natural Science Foundation of China (Grant Nos.
% 11471288, 11201287) and China Scholarship Council (No. 201406895019).

\end{document}